\documentclass[12pt, a4paper]{amsart}

\setlength{\textwidth}{6.4truein}              
\setlength{\oddsidemargin}{0truein}
\setlength{\evensidemargin}{0truein}

\usepackage{a4wide}
\usepackage[english]{babel}
\usepackage[T2A]{fontenc}
\usepackage[utf8]{inputenc} 
\usepackage{amsfonts}
\usepackage{amssymb, amsthm, amscd}
\usepackage{amsmath}
\usepackage{mathtools}
\usepackage{needspace}
\usepackage{etoolbox}
\usepackage{lipsum}
\usepackage{comment}
\usepackage{cmap}
\usepackage[pdftex]{graphicx}
\usepackage[unicode]{hyperref}
\usepackage[matrix,arrow,curve]{xy}
\usepackage[usenames,dvipsnames]{xcolor}
\usepackage{colortbl}
\usepackage{textcomp}
\usepackage{cite}
\usepackage{euscript}

\pagestyle{plain}

\sloppy

\makeatletter
\def\@settitle{\begin{center}
		\baselineskip14\p@\relax
		\bfseries
		\LARGE
		\@title
	\end{center}
}
\makeatother

\newtheorem{theorem}{Theorem}[section]
\newtheorem{lemma}[theorem]{Lemma}
\newtheorem{proposition}[theorem]{Proposition}
\newtheorem{corollary}[theorem]{Corollary}
\theoremstyle{definition}
\newtheorem{definition}[theorem]{Definition}
\newtheorem*{remark}{Remark}
\newtheorem{example}[theorem]{Example}

\renewcommand{\leq}{\leqslant}
\newcommand{\R}{\mathbb{R}}
\newcommand{\Z}{\mathbb{Z}}
\newcommand{\A}{\mathbb{A}}
\newcommand{\CC}{\mathbb{C}}
\renewcommand{\G}{\mathbb{G}}
\newcommand{\GL}{\mathrm{GL}}
\newcommand{\Gal}{\mathrm{Gal}}
\newcommand{\Aut}{\mathrm{Aut}}
\newcommand{\Q}{\mathbb{Q}}

\newcommand{\Sch}{\mathrm{Sch}}
\newcommand{\Hom}{\mathrm{Hom}}
\newcommand{\Spec}{\mathrm{Spec}}

\title{Twisted forms of commutative monoid structures on affine spaces}
\author{Andrei V. Semenov and Pavel Gvozdevsky}

\thanks{Both authors are supported by ``Native Towns'', a social investment program of PJSC ``Gazprom Neft''. Also the first author is supported in part by The Euler International Mathematical Institute, grant number is 075-15-2019-16-20. The first author is a winner of Young Russian Mathematics award and he is grateful to its jury and sponsors.}

\address{Andrei V. Semenov:
Chebyshev Laboratory, 
St. Petersburg State University, 14th Line V.O., 29B, 
Saint Petersburg 199178 Russia}

\email{asemenov.spb.56@gmail.com}

\address{Pavel Gvozdevsky:
Chebyshev Laboratory, 
St. Petersburg State University, 14th Line V.O., 29B, 
Saint Petersburg 199178 Russia}

\email{gvozdevskiy96@gmail.com}

\keywords{Algebraic monoids, Affine spaces, Galois descent, Non-closed fields, Schemes}

\begin{document}
\maketitle

\begin{abstract} In this paper, we study commutative algebraic monoid structures on affine spaces over an arbitrary field of characteristic zero. We obtain full classification of such structures on $\mathbb{A}_K^2$ and $\mathbb{A}_K^3$ and describe some generalizations on $\mathbb{A}_K^n$ for any dimension~ $n$. 
\end{abstract}
\vspace{10mm}

\section{Introduction}
In this paper, we study affine algebraic commutative monoids over an arbitrary field $K$ of characteristic zero. An affine algebraic monoid is an irreducible affine variety $M$ together with an assosiative multiplication map $\cdot : M \times M \longrightarrow M$, where map $\cdot$ is a morphism of algebraic varieties, and unit $e \in M(K)$ such that $em=me=m$ for all $m \in M$. A monoid $M$ is called commutative, if, in addition, $ab = ba$ for all $a, b \in M$. This article stands as a generalization of the paper \cite{1}, where affine algebraic commutative monoids were studied over algebraically closed field $L$ of characteristic zero, and classification of such monoids on $\mathbb{A}_L^2$ and $\mathbb{A}_L^3$ was reached. Our goal is to obtain such classification over an arbitrary field $K$ of characteristic zero, not necessary algebraically closed. \par 
It will be explained in Section 2.4 that all the monoids studied here are the twisted forms of monoids defined in \cite{1}, i.e. they are obtained by Galois descent. Basic facts about technique of Galois descent will be recalled in Section 2.3. Our main results, Theorem 4.1 and Theorem 5.3, claims that in dimensions 2 and 3 all nontrivial twisted forms can be described in terms of formulae that define multiplication and norm in separable algebras over the base field. \par 
In order to achieve these results we use the idea of the connection between a monoid $M$ and its group of invertible elements $G(M)$. Furthermore, we develop the connection between $\Aut(M)$ and $\Aut(G(M))$ and use the Galois descent technique applied to $\Aut(M)$.\par 
Nowadays the theory of affine algebraic monoids and group embeddings is a deeply developed area of mathematics lying at the intersection of algebra, algebraic geometry, combinatorics and representation theory. For further information and general presentations see \cite{7, 8, 10, 13}. Also let us note that same results on algebraic monoids over affine surfaces were obtained in \cite{2} and \cite{3}. \par 
 In Section 2 we recall some basic facts about schemes and algebraic monoids and develop our technique, including Galois descent and the first cohomology set of Galois group with coefficients in the authomorphism groups of monoids. \par 
 In Section 3 we obtain classification of commutative algebraic monoids on $\A_K^n$ of rank $0$, $n$ or $n-1$ (the notion of rank is introduced in Definition 2.3) for arbitrary field $K$ of characteristic zero. \par 
 In Sections 4 and 5 we reach a classification of all the monoid structures on $\mathbb{A}_K^2$ and $\mathbb{A}_K^3$ correspondently. \par 
 In the last section we give final remarks on automorphism groups of the obtained monoids, and formulate possible questions about further generalizations and applications of our technique.

Authors are grateful to professor Ivan Arzhantsev for helpful advises and productive discussions.

\section{Main definitions and facts}
Consider a field $K$ of characteristic zero and its algebraic closure $\overline{K}$. By $\Gal_{\overline{K}/K}$ we denote the Galois group of this extension. 

\subsection{Schemes as functors}

Throughout the paper we deal with schemes over $K$. A $K$-scheme $X$ is uniquely defined by its functor of points. This is a functor from the category of $K$-algebras to the category of sets, given by
$$
B\mapsto \Hom_{\Sch_K}(\Spec(B),X).
$$

There is an explicit description of functors that occurs this way, see \cite[Chapter VI]{4}. Thus we may identify these two notions. So if we want to define certain scheme, we just need to define the corresponding functor. In this approach, morphisms of schemes are the natural transformations of the corresponding functors. Affine schemes correspond to representable functors.

If $L/K$ is a field extension, then any $L$-scheme $X$ can be viewed as a $K$-scheme via functor $X_K(-)$ from category of $K$-algebras to the category of sets, where $X_K(B)$ consists of pairs $(q,x)$, such that $q\colon L\to B$ defines a structure of $L$-algebra on $B$, and $x \in X(B)$, where $B$ is viewed as an algebra over $L$ via $q$.

By \textit{variety} over $K$ we mean a reduced, separated $K$-scheme of finite type.

\subsection{Algebraic monoids}

\begin{definition}
An (affine) algebraic monoid over $K$ is an irreducible (affine) algebraic variety $M$ over $K$ with an associative multiplication $\cdot : M \times M \longrightarrow M$, which is a morphism of algebraic varieties, and an element $e\in M(K)$ which is a unit with respect to this multiplication.

If $M$ is isomorphic to an affine space $\A_K^n$ as a variety, we call it a monoid structure on~$\A_K^n$.

\end{definition}

\begin{lemma}
Let $M$ be a commutative algebraic monoid over $K$, whose underlying variety is geometrically irreducible (i.e. remains irreducible after the base change $K\to\overline{K}$). Then the functor $G$ from the category of $K$-algebras to the category of sets, given by
$$
G(B)=\{\text{invertible elements of }M(B)\},
$$
is an open subscheme of $M$.
\end{lemma}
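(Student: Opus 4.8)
I want to show that the functor $G$ of units is representable by an open subscheme of $M$. The standard strategy for "the units form an open subscheme" results is to realize $G$ as a fibered product (or a preimage of an open set) built from morphisms I already control. The key observation is that for a commutative monoid, an element $m$ is invertible precisely when there exists $m'$ with $mm' = e$, and such $m'$ is then unique. So set-theoretically $G(B)$ is the image of the closed subscheme $Z = \{(m,m') \in M \times M : mm' = e\}$ under the first projection $\mathrm{pr}_1 \colon M \times M \to M$. The plan is to show that this projection restricted to $Z$ is an open immersion onto its image, and that the image is open.

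**Key steps.** First I would define $Z$ as the fiber product of $\cdot \colon M \times M \to M$ with the point $e \colon \Spec K \to M$; since $M$ is separated, $Z$ is a closed subscheme of $M \times M$. Next, I would argue that $\mathrm{pr}_1|_Z \colon Z \to M$ is a monomorphism: given $m \in M(B)$, uniqueness of the inverse (which uses commutativity and associativity: if $mm' = mm'' = e$ then $m' = m'(mm'') = (m'm)m'' = m''$) shows the fiber of $\mathrm{pr}_1|_Z$ over any $B$-point is either empty or a single point, functorially. Then the crucial point is that $\mathrm{pr}_1|_Z$ is actually an \emph{open immersion}. For this I would use the translation trick: multiplication by a unit is an automorphism of $M$, so $Z$ is stable under the action of $G$ and the whole situation has enough homogeneity. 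More concretely, I expect the cleanest route is to show $Z \to M$ is étale (flat + unramified) and a monomorphism, hence an open immersion by a standard result; flatness and unramifiedness can be checked after base change to $\overline K$, where $M_{\overline K}$ is an irreducible algebraic monoid over an algebraically closed field and the classical fact (the unit group of an algebraic monoid is open, see Renner's book) applies, giving that $Z_{\overline K} \to M_{\overline K}$ is an open immersion. Openness of a morphism and the property of being an open immersion descend along the faithfully flat base change $K \to \overline K$.

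**The main obstacle.** The genuinely delicate step is passing from the known algebraically-closed-field statement to the statement over $K$ — specifically, verifying that "$\mathrm{pr}_1|_Z$ is an open immersion" is a property that can be checked after base change to $\overline K$. Being a monomorphism of finite type plus being an open immersion after a faithfully flat base change does imply being an open immersion, but one must be a little careful: the right formulation is that $Z \to M$ is a monomorphism which is flat and locally of finite presentation (hence an open immersion by EGA IV 17.9.1), and flatness is fppf-local on the base, so it suffices to check it over $\overline K$. Alternatively, one avoids étaleness entirely: the image $U = \mathrm{pr}_1(Z)$ is the set of $m$ such that $m$ is a unit; over $\overline K$ this image is a known open set $U_{\overline K} \subseteq M_{\overline K}$, and since $K \to \overline K$ is faithfully flat, a subset of $|M|$ is open iff its preimage in $|M_{\overline K}|$ is open, so $U$ is open; one then checks the induced bijection of functors $G \cong U$ on $B$-points, using uniqueness of inverses to produce the inverse map $U(B) \to Z(B) \to M(B)$ — the only subtlety being that the inverse $m'$ of $m \in U(B)$ genuinely exists as a $B$-point, which follows because $Z \to U$ is an isomorphism (a monomorphism that is surjective and, after base change, an isomorphism). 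I would present the second, more elementary argument, with the faithfully flat descent of openness as the one external input beyond the classical algebraically-closed case.
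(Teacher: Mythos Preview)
Your proposal is correct and follows essentially the same route as the paper: identify $G$ with the fiber $Z=\{(x,y):xy=e\}\subseteq M\times M$, observe that the projection $Z\to M$ becomes an open immersion after base change to $\overline{K}$ by the classical result for algebraic monoids over algebraically closed fields, and then invoke fpqc descent of the property ``open immersion'' (the paper cites the Stacks Project for this last step). Your write-up is more explicit about the monomorphism check and the descent mechanism, but the architecture is the same.
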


\begin{proof}
This functor is naturally isomorphic to the functor
$$
G'(B)=\{(x,y)\in (M\times M)(B)\colon xy=e\},
$$
which is a fiber of a multiplication morphism $\cdot \colon M\times M\to M$. Therefore, $G$ is a scheme over $K$, and the embedding $i\colon G\to M$ is a morphism of schemes.

It follows from the paper \cite{9} that $i$ becomes an open immersion after the base change $K\to\overline{K}$, and it is known that if a morphism of schemes becomes an open immersion after the extension of the base field, then it is an open immersion. The last fact is a special case of \cite[Lemma 35.20.16]{12}.
\end{proof}

It is clear that $G(-)$ is a group scheme.

\begin{definition}
For a commutative monoid $M$ let us define the rank of $M$ (notation: $\mathrm{rank} (M)$) as the dimension of the maximal torus of the group of its invertible elements. Also by $\mathrm{corank} (M)$ we will denote the codimension of the maximal torus of the group of its invertible elements.
\end{definition}
It is easy to see that the group $G(M)$ of the invertible elements of $M$ is isomorphic to $\G_m^r \times \G_a^s$ under the field extension $\overline{K}$ (the proof of this well-known fact one may see in Chapter 2 of \cite{5}, for example), since $M$ is commutative (and so $G(M)$ is commutative too). Here $r = \mathrm{rank}(M)$ and $s = \mathrm{corank}(M)$, and we often will simply write $r$ and $s$ instead of $\mathrm{rank}(M)$ and $\mathrm{corank}(M)$.

\subsection{Galois descent}

Let $L/K$ be a finite Galois extension of fields with Galois group $\Gamma$. 
\begin{definition}
Let $Y$ be a scheme over $L$ on which $\Gamma$ acts by $K$-scheme automorphisms. Such action is called {\it compatible} if for any $\sigma\in \Gamma$ the diagram 
$$
\xymatrix{
Y\ar[r]^\sigma\ar[d] & Y\ar[d]\\
\mathrm{Spec}(L)\ar[r]^\sigma & \mathrm{Spec}(L)
}
$$
commutes.
\end{definition}

Сonsider the functor 
\begin{align*}
   F:  \{\text{quasiproj. } K \text{-schemes}\}& \longrightarrow \{\text{quasiproj. } L \text{-schemes with compatible  action of } \Gamma\}\\
    X&\mapsto X\otimes_K L,
\end{align*}
where $X\otimes L$ is the $L$-scheme defined by
$$
X\otimes L(B)=X(B).
$$
for any $L$-algebra $B$, and there is a Galois group action (via the canonical action on $L$) on $X\otimes_K L$ viewed as a $K$-scheme. Note that as a $K$-scheme $X\otimes L$ is just a product $X\times_{\Spec(K)} \Spec(L)$.

\begin{theorem}(Galois descent, see \cite[Section 4.4]{6}.)
The functor $F$ is an equivalence of categories.
\end{theorem}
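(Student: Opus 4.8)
The plan is to exhibit an explicit quasi-inverse to $F$, sending an $L$-scheme $Y$ equipped with a compatible $\Gamma$-action to a ``scheme of invariants'', and to reduce every verification to the affine case, where the statement is the classical Galois descent for modules and algebras.

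First I would treat the affine case. If $Y=\Spec(A)$, then a compatible $\Gamma$-action on $Y$ is the same datum as a semilinear action of $\Gamma$ on the $L$-algebra $A$, i.e. $\sigma(\lambda a)=\sigma(\lambda)\,\sigma(a)$ for $\lambda\in L$, $a\in A$. Put $X=\Spec(A^\Gamma)$, which is a $K$-scheme. The crucial point is that the natural multiplication map $A^\Gamma\otimes_K L\to A$ is an isomorphism of $L$-algebras; this is the algebra version of Galois descent for $K$-vector spaces, which one may take from \cite[Section~4.4]{6}. This at once yields a natural isomorphism $F(X)=X\otimes_K L\cong Y$ compatible with the $\Gamma$-actions, so $F$ is essentially surjective on affine objects. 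For full faithfulness on affines one notes that a morphism of affine $K$-schemes $X\to X'$ is the same as a $K$-algebra homomorphism $A'^\Gamma\to A^\Gamma$, while a $\Gamma$-equivariant $L$-morphism $Y\to Y'$ is a $\Gamma$-equivariant $L$-algebra homomorphism $A'\to A$; passing to $\Gamma$-invariants and using the isomorphism above identifies these two sets, giving a bijection on $\Hom$-sets.

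Next I would globalize along a $\Gamma$-stable affine open cover. Given a quasi-projective $L$-scheme $Y$ with compatible $\Gamma$-action and a point $y\in Y$, the orbit $\Gamma y$ is finite, hence contained in a single affine open $U\subseteq Y$ by quasi-projectivity; then $\bigcap_{\sigma\in\Gamma}\sigma(U)$ is a $\Gamma$-stable affine open neighbourhood of $y$, affineness of the intersection following from separatedness of $Y$. Applying the affine case to the members of such a cover and to their pairwise intersections descends the gluing data, and one reassembles a $K$-scheme $X$ with $F(X)\cong Y$; separatedness, reducedness and finiteness of type descend along the faithfully flat extension $K\to L$ as well. To land inside \emph{quasi-projective} $K$-schemes one finally checks that $X$ is quasi-projective, which comes down to descending an ample line bundle on $Y$: the tensor product of the $\Gamma$-translates of an ample bundle is a $\Gamma$-equivariant ample bundle, and equivariant line bundles descend by the line-bundle analogue of the affine argument above.

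The main obstacle is exactly this globalization step: producing the $\Gamma$-stable affine cover, verifying that the locally descended pieces glue to an honest scheme of the required type, and descending the polarization. Conceptually all of it is faithfully flat descent, packaged through the isomorphism $L\otimes_K L\cong\prod_{\sigma\in\Gamma}L$, under which a compatible $\Gamma$-action is precisely a descent datum for the fppf cover $\Spec(L)\to\Spec(K)$; in writing the proof I would either carry out the gluing by hand as indicated or simply invoke the effectivity of fppf descent for quasi-projective schemes, e.g. as recorded in \cite[Section~4.4]{6}.
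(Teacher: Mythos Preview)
The paper does not actually prove this theorem: it is stated with a reference to \cite[Section~4.4]{6} and no proof is given, so there is no argument in the paper to compare against. Your sketch is the standard route to Galois descent (affine case via semilinear $\Gamma$-actions and the isomorphism $A^\Gamma\otimes_K L\cong A$, then globalization along a $\Gamma$-stable affine open cover obtained from quasi-projectivity, with descent of the ample bundle to stay in the quasi-projective category), and it is correct as an outline; this is essentially what one finds in the cited source.
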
 

Therefore, this functor induces an equivalence between the categories of monoinds in the corresponding categories. Let us formulate it as follows.

\begin{corollary}
The functor

\begin{align*}
   F:  \{\text{quasiproj. } K \text{-monoids}\}& \longrightarrow {\left\{\begin{aligned}&\text{quasiproj. } L \text{-monoids with compatible  action of } \Gamma\\&\text{ preserving the monoidal structure}\end{aligned}\right\} }\\
   M&\mapsto M\otimes_K L,
\end{align*}

is an equivalence of categories.
\end{corollary}

Now let $M$ be an algebraic monoid over $K$. By $\Aut(M)$ we denote the functor
\begin{align*}
    \{\text{Field extensions }& L/K\}  \longrightarrow \{\text{Groups}\}\\
    L&\mapsto \{\text{Automorphisms of the } L\text{-scheme }M\otimes_K L\text{ preserving monoidal structure}\}.
\end{align*}

The action of $\Gamma$ on $L$ induces an action on $\Aut(M)(L)$ by functoriality: the scheme $\Aut(M)(L)$ acts on $M \otimes L$ by the polinomials with coefficients in $L$, since the coordinates on this scheme are taken over $K$. So one can define the action of $\Gamma$ on $\Aut(M)(L)$ as the action of $\Gamma$ on such coefficients. More precisely, 
 \[ \sigma(f) =\sigma \circ f\circ \sigma^{-1} \]
 where $\sigma$ on the right side is a map of $ M\otimes L$ (as a scheme over $K$).
 
\begin{definition}
A map $f\colon \Gamma\to \Aut(M)(L)$ is called a cocycle if it satisfies $f(\tau \cdot \rho)=f(\tau)\circ \tau \big( f(\rho) \big)$ for all $\rho, \tau \in \Gamma$, where $\cdot$ is the multiplication in $\Gamma$ and $\circ$ is the multiplication on $\Aut(M)(L)$.
\end{definition}
\begin{definition}
 Cocycles $f$ and $g$ are equivalent if there exists $c\in \Aut(M)(L)$, such that $g(\sigma)=c \circ f(\sigma) \circ [\sigma(c)]^{-1}$ for all $\sigma \in \Gamma$, where $\circ$ is the multiplication on $\Aut(M)(L)$. The set of equivalence classes will be denoted by $H^1(\Gamma,\Aut(M)(L))$.
\end{definition}
Obviously, the first cohomology is a pointed set, where the basepoint is the class of the trivial cocycle that maps everything to identity.
 \begin{definition}
An algebraic monoid $M'$ over $K$ is called a twisted $L/K$-form of $M$ if $M\otimes_K L$ and $M'\otimes_K L$ are isomorphic as algebraic monoids over $L$.
\end{definition}

It can be deduced from the facts above that there exists a bijection between isomorphism classes of $L/K$-forms of $M$
and $H^1(\Gamma, \Aut(M)(L))$, such that an $L/K$-form $M'$ with an isomorphism $\phi\colon M\otimes L\to M'\otimes L$ corresponds to the class of a cocycle $f$ defined by 
$$
f(\sigma)=\phi^{-1}\circ\sigma\circ \phi\circ \sigma^{-1} \in \Aut(M)(L),
$$
where $\circ$ denotes the composition of maps, cf. \cite[Remark 4.5.4]{6}.

We can recover $M'$ from a cocycle $f$ as follows
\begin{multline*}
M'(B)=\Hom_{\Sch_K}(\Spec(B),M')=\{x\in\Hom_{\Sch_L}(\Spec(B\otimes L),M'\otimes L)\mid \forall \sigma\in\Gamma\quad x\circ\sigma=\sigma\circ x \}=\\=\{x\in\Hom_{\Sch_L}(\Spec(B\otimes L),M\otimes L)\mid \forall \sigma\in\Gamma\quad \phi\circ x\circ\sigma=\sigma\circ \phi\circ x \}=\\
=\{x\in\Hom_{\Sch_L}(\Spec(B\otimes L),M\otimes L)\mid \forall \sigma\in\Gamma\quad f(\sigma)^{-1}\circ x=\sigma\circ x\circ\sigma^{-1} \}. \end{multline*}
Now one need to note that as a scheme the functor $M\otimes_K L (-)$ is equal to the functor $\Hom_{\Sch_L}(\Spec(-), M\otimes_K L)$ by definition. Also one can mention that $\sigma\circ x\circ\sigma^{-1} = \sigma(x)$ for any $\sigma \in \Gamma$ and any $x \in \Hom_{\Sch_L}(\Spec(B\otimes L),M\otimes L)$. So we have
\begin{multline*}
M'(B)=\{x\in\Hom_{\Sch_L}(\Spec(B\otimes L),M\otimes L)\mid \forall \sigma\in\Gamma\quad f(\sigma)^{-1}\circ x=\sigma\circ x\circ\sigma^{-1} \} = \\ 
=\{x\in M\otimes_K L(B\otimes_K L) \mid \forall \sigma\in\Gamma\quad f(\sigma)^{-1}(x)=\sigma(x)\},
\end{multline*}
where $B$ is any $K$-algebra and $\circ$ denotes the composition of maps as above.

The proofs for algebraic groups can be found, for example, in \cite{14}. This principle is very general, and the case of algebraic monoids is not that different. 

\begin{example}
Let $K=\R$, $L=\CC$, and $M=\G_m$. In this case $\Aut(M)(-)$ equals to the constant functor $\Z/2\Z$, i.e. for all field extensions $L/K$ we have $\Aut(\mathbb{G}_m)(L) = \Z/2\Z$.

Let $M'$ be the unit circle, i.e the group scheme over $\R$ defined by the equation $x_1^2+x_2^2=1$, where the multiplication is given by formulae that defines multiplication of complex numbers
$$
(x_1,x_2)*(y_1,y_2)=(x_1y_1-x_2y_2,x_1y_2+x_2y_1).
$$
There is an isomorphism 
$$
\phi\colon M\otimes \CC\to M'\otimes \CC,\quad z\mapsto (\tfrac{z+z^{-1}}{2},\tfrac{z-z^{-1}}{2i}).
$$
Thus $M'$ is a twisted form of $M$. Let $f$ be the corresponding cocycle. Let us calculate $f(\sigma)$ for $\sigma\in\Gal_{\CC/\R}$ be the complex conjugation. By definition we have $f(\sigma)=\phi^{-1}\circ\sigma\circ \phi\circ \sigma^{-1}$. We need to understand how $f(\sigma)$ acts on the functor $(M\otimes \CC)_{\R}(-)$, so fix an $\R$-algebra $B$, and fix $(q,z)\in (M\otimes \CC)_{\R}(B)$, i.e $q\colon \CC\to B$ gives us a structure of $\CC$-algebra and $z$ is an invertible element of $B$. Denote by $\overline{q}$ the composition of $q$ with the conjugation on $\CC$. We have
\begin{multline*}
   f(\sigma)(q,z)=(\phi^{-1}\circ\sigma\circ \phi\circ \sigma^{-1})(q,z)=(\phi^{-1}\circ\sigma\circ \phi)(\overline{q},z)=(\phi^{-1}\circ\sigma)(\overline{q},(\tfrac{z+z^{-1}}{2},\tfrac{z-z^{-1}}{2\overline{q}(i)}))=\\
   =(\phi^{-1}\circ\sigma)(\overline{q},(\tfrac{z+z^{-1}}{2},\tfrac{z^{-1}-z}{2q(i)}))=\phi^{-1}(q,(\tfrac{z+z^{-1}}{2},\tfrac{z^{-1}-z}{2q(i)}))=(q,z^{-1}).
\end{multline*}

We see that $f(\sigma)$ does not change the $\CC$-algebra structure, so it is an automorphism of $M\otimes \CC$ as a $\CC$-scheme, and there is only one such nontrivial isomorphism: $z\mapsto z^{-1}$. 

Now let us show how the inverse process works. Suppose that we have a cocycle $f$ that takes complex conjugation to the non-trivial automorphism of $M\otimes \CC$, and we want to recover the corresponding twisted form $M'$. We must use formula
$$
M'(B)=\{x\in M\otimes_K L(B\otimes_K L) \mid \forall \sigma\in\Gamma\quad f(\sigma)^{-1}(x)=\sigma(x)\},
$$
since $M'$ is the functor and we need to understand its structure. Now fix an $\R$-algebra $B$. We have
\[
M\otimes \CC(B\otimes \CC)=\{(x,y)\in (B\otimes \CC)^2\mid xy=1\}\simeq \]
\[\simeq \{(x_1,x_2,y_1.y_2)\in B^4\mid (x_1+ix_2)(y_1+iy_2)=1\}= \]
\[ = \{(x_1,x_2,y_1.y_2)\in B^4\mid x_1y_1-x_2y_2=1,\; x_1y_2+x_2y_1=0\}.
\]

In these terms complex conjugation acts by the rule $(x_1,x_2,y_1,y_2)\mapsto (x_1,-x_2,y_1,-y_2)$ and the non-trivial automorphism of $\G_m$ acts by the rule $(x_1,x_2,y_1,y_2)\mapsto (y_1,y_2,x_1,x_2)$. Thus we have
\begin{multline*}
M'(B)=\{(x_1,x_2,y_1.y_2)\in B^4\mid x_1y_1-x_2y_2=1,\; x_1y_2+x_2y_1=0,\; x_1=y_1, \; x_2=-y_2\}\simeq\\\simeq\{(x_1,x_2)\in B^2\mid x_1^2+x_2^2=1\}.
\end{multline*}
So $M'$ is the unit circle and the multiplication is given by formulae that defines multiplication of complex numbers.

\end{example}

\bigskip

Therefore, the isomorphism classes of all twisted forms of $M$ (union by all finite Galois extensions $L/K$) are classified by the set
\[
H^1(\Gal_{\overline{K}/K},\Aut(M))\overset{\mathrm{def}}{=}\varinjlim H^1(\Gal_{L/K},\Aut(M)(L)),
\]

where the limit is taken over all finite Galois extensions $K\le L\le \overline{K}$. 

\medskip

The following important statement will be referred as Hilbert's Theorem 90

\begin{theorem}(Hilbert's 90)
The sets $H^1(\Gal_{\overline{K}/K},\G_a)$ and $H^1(\Gal_{\overline{K}/K},\GL(n,-))$ are trivial for $n\ge 1$, i.e. they contain only the basepoint.
\end{theorem}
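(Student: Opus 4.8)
The plan is to reduce to finite Galois extensions and then prove each statement by an explicit averaging over the Galois group. Since, by the definition recalled above, $H^1(\Gal_{\overline{K}/K},-)$ is the direct limit of the $H^1(\Gal_{L/K},-(L))$ over finite Galois subextensions $K\le L\le\overline{K}$, it suffices to show that for every finite Galois extension $L/K$ with group $\Gamma$, each cocycle with values in $\G_a(L)=(L,+)$, resp.\ in $\GL(n,L)$, is equivalent to the trivial cocycle in the sense of Definition~2.7; concretely this means it has the form $\sigma\mapsto d-\sigma(d)$ in the additive case and the form $\sigma\mapsto c\,\sigma(c)^{-1}$ in the $\GL_n$ case.

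For $\G_a$, I would start from an additive cocycle $c\colon\Gamma\to L$, so $c(\sigma\tau)=c(\sigma)+\sigma(c(\tau))$. Since $\mathrm{char}\,K=0$ the extension $L/K$ is separable, so the trace $\mathrm{Tr}_{L/K}\colon L\to K$ is a nonzero $K$-linear functional, hence surjective; fix $\theta\in L$ with $\mathrm{Tr}_{L/K}(\theta)=1$ and put $d=\sum_{\tau\in\Gamma}c(\tau)\,\tau(\theta)$. Substituting $\sigma(c(\tau))=c(\sigma\tau)-c(\sigma)$ into $\sigma(d)$, reindexing the resulting main sum by $\rho=\sigma\tau$, and using $\sum_{\rho\in\Gamma}\rho(\theta)=\mathrm{Tr}_{L/K}(\theta)=1$, one gets $\sigma(d)=d-c(\sigma)$, i.e.\ $c(\sigma)=d-\sigma(d)$. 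So $c$ is a coboundary and $H^1(\Gal_{\overline{K}/K},\G_a)$ is trivial.

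For $\GL_n$, I would start from a cocycle $a\colon\Gamma\to\GL(n,L)$, $a_{\sigma\tau}=a_\sigma\,\sigma(a_\tau)$, and twist the standard semilinear $\Gamma$-action on $V=L^n$ into $\sigma\ast v:=a_\sigma\,\sigma(v)$, the Galois action being coordinatewise; the cocycle identity is exactly what makes $\ast$ a group action (and $a_e=1$). For each $v\in V$ the averaged vector $\widetilde v:=\sum_{\tau\in\Gamma}a_\tau\,\tau(v)$ is $\ast$-invariant, again by the cocycle identity after reindexing. The one nonformal point is that these invariant vectors span $V$ over $L$: if a nonzero row vector $\mu^{\mathsf{T}}$ annihilated all of them, then taking $v=t\,e_k$ with $t\in L^\times$ would give $\sum_{\tau\in\Gamma}(\mu^{\mathsf{T}}a_\tau)_k\,\tau(t)=0$ for all such $t$ and all $k$, and Dedekind's lemma on linear independence of the distinct characters $\tau|_{L^\times}$ would force $\mu^{\mathsf{T}}a_\tau=0$ for every $\tau$, hence $\mu=0$ since $a_\tau\in\GL(n,L)$ — a contradiction. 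Choosing among the invariant vectors an $L$-basis $w_1,\dots,w_n$ of $V$ and letting $c\in\GL(n,L)$ be the matrix with these as columns, the invariance $a_\sigma\,\sigma(w_j)=w_j$ reads $a_\sigma\,\sigma(c)=c$, i.e.\ $a_\sigma=c\,\sigma(c)^{-1}$, so the cocycle is a coboundary.

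There is no deep obstacle here: both statements are classical and could in principle simply be quoted from standard references on Galois cohomology. The only inputs beyond routine bookkeeping are the surjectivity of the trace of a separable extension and Dedekind's linear independence of characters, and the one place that demands care is matching the coboundary convention of Definition~2.7 and keeping the twisted semilinear $\Gamma$-action consistent throughout the $\GL_n$ argument.
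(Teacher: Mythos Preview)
Your argument is correct: both parts are the classical proofs of Hilbert~90, the additive case via the trace-averaging trick and the multiplicative case via the twisted semilinear action together with Dedekind's independence of characters. The minor sign discrepancy in the additive case (you obtain $c(\sigma)=d-\sigma(d)$ whereas Definition~2.7 literally gives coboundaries of the form $\sigma(c)-c$) is harmless, since replacing $d$ by $-d$ fixes it; and in the $\GL_n$ case your conclusion $a_\sigma=c\,\sigma(c)^{-1}$ matches the paper's coboundary convention exactly.

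As for comparison with the paper: there is nothing to compare, because the paper does not prove this theorem at all --- it simply records the statement and cites Serre's \textit{Galois Cohomology} for the proof. What you have written is essentially the argument one finds in that reference (or in Waterhouse, also cited in the paper), so you have supplied precisely what the paper outsources. If anything, your write-up is more detailed than necessary for a result the authors clearly regard as background; a one-line citation would have sufficed, as you yourself acknowledge in your final paragraph.
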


The proofs can be found, for example, in \cite{11}, Chapter 3, \S 1.1.

\subsection{Strategy of classification}

The paper \cite{1} classifies certain special cases of commutative monoid structures on affine spaces over a closed field. We intend to extend these results to an arbitrary field of characteristic zero. Note that all monoids described in \cite{1} are defined by formulae with integer coefficients. Therefore, we can consider monoids defined by the same formulae over our field $K$. Let us called such monoids split. 

Hence over $K$ all the monoids in question are the twisted forms of the split ones. We will classify them using Galois cohomology. Thus we need to study automorphisms of the split monoids. 

\begin{remark}
It is not obvious that all of those twisted forms are isomorphic to an affine space as a varieties, but that will follow a posteriori from our classification.
\end{remark}

\subsection{Automorphisms of algebraic monoids}

\begin{lemma}\label{Aut}
Let $M$ be a geometrically irreducible commutative algebraic monoid over $K$, and $G$ be its group of invertible elements. Then
\begin{enumerate}
    \item Any $g\in \Aut(G)(L)$ induces a birational automorphism of the scheme $M\otimes_K L$ (i.e. a scheme automorphism of a dense open subscheme) for any extension $L$ of $K$,
    
   \item The restriction map $\Aut(M)\to \Aut(G)$ is a monomorphism of functors and its image in $\Aut(G)(L)$ consists precisely of elements $g$ such that both birational maps induced by $g$ and $g^{-1}$ are regular (i.e. can be extended to a morphism on the whole $M\otimes_K L$).
\end{enumerate}
\end{lemma}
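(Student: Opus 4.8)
The plan is to exploit the earlier observation that the group of units $G$ embeds in $M$ as an open subscheme; since $M$ is geometrically irreducible, $G$ is moreover dense in $M$, and both properties survive the base change $K\to L$ for any extension $L/K$ (openness trivially, density because $M\otimes_K L$ stays irreducible). Hence $G\otimes_K L$ is exactly the group of units of $M\otimes_K L$, it is an open dense subscheme, and the two share the same function field. The one soft tool I will use repeatedly is the \emph{separatedness principle}: if $X$ is a reduced scheme, $Y$ a separated scheme, $U\subseteq X$ a dense open, and $f,g\colon X\to Y$ morphisms with $f|_U=g|_U$, then $f=g$ (the equalizer is a closed subscheme of $X$ containing $U$, hence all of $X$). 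It will be applied to $X=M\otimes_K L$ and to $X=(M\times M)\otimes_K L\cong (M\otimes_K L)\times_L(M\otimes_K L)$, both of which are reduced, irreducible and separated; reducedness of the product uses that $K$ has characteristic zero, so all varieties in play are geometrically reduced, and irreducibility of the product uses geometric irreducibility of $M$.

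Part (1) is then immediate: given $g\in\Aut(G)(L)$, the composite $G\otimes_K L\xrightarrow{\,g\,}G\otimes_K L\hookrightarrow M\otimes_K L$ is a rational self-map of $M\otimes_K L$, and the composite formed from $g^{-1}$ provides a two-sided rational inverse, so $g$ induces a birational automorphism of $M\otimes_K L$.

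For the monomorphism claim in (2): a monoid automorphism of $M\otimes_K L$ carries units to units, hence restricts to a group automorphism of $G\otimes_K L$, so the restriction is a well-defined natural transformation; and if two such automorphisms have equal restrictions they agree on the dense open $G\otimes_K L$, hence coincide by the separatedness principle, so $\Aut(M)(L)\to\Aut(G)(L)$ is injective for every $L$. To identify the image, observe first that if $g$ is the restriction of $\widetilde{g}\in\Aut(M)(L)$, then $\widetilde{g}$ and $(\widetilde{g})^{-1}$ are regular self-maps of $M\otimes_K L$ extending $g$ and $g^{-1}$, so the birational maps induced by $g$ and $g^{-1}$ are both regular. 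Conversely, suppose $g\in\Aut(G)(L)$ is such that these birational maps extend to morphisms $\widetilde{g},\widetilde{g^{-1}}\colon M\otimes_K L\to M\otimes_K L$. Their compositions $\widetilde{g}\circ\widetilde{g^{-1}}$ and $\widetilde{g^{-1}}\circ\widetilde{g}$ restrict to the identity on $G\otimes_K L$, hence equal $\mathrm{id}_{M\otimes_K L}$ by the separatedness principle, so $\widetilde{g}$ is a scheme automorphism. It is multiplicative because the two morphisms $\mu\circ(\widetilde{g}\times\widetilde{g})$ and $\widetilde{g}\circ\mu$ from $(M\otimes_K L)\times_L(M\otimes_K L)$ to $M\otimes_K L$, where $\mu$ denotes the multiplication, agree on the dense open $(G\otimes_K L)\times_L(G\otimes_K L)$ (as $g$ is a group homomorphism), hence agree everywhere; and $\widetilde{g}(e)=g(e)=e$. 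Thus $\widetilde{g}\in\Aut(M)(L)$ restricts to $g$, which pins down the image.

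The crux — the only point where something genuinely has to be checked rather than merely unwound — is the converse direction just above: that once $g$ and $g^{-1}$ are regular, the resulting scheme automorphism $\widetilde{g}$ is automatically compatible with the multiplication and the unit. The actual work there consists in verifying that the scheme-theoretic inputs (density of $G$ in $M$, reducedness and irreducibility of $M$ and of $M\times M$, separatedness) all persist after the base change to $L$ — which is precisely where geometric irreducibility of $M$ and the characteristic-zero hypothesis enter — after which multiplicativity of $\widetilde{g}$ falls out of the separatedness principle.
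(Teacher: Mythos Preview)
Your proof is correct and is a careful unpacking of exactly the paper's own argument, which simply reads ``Follows easily from the fact that $G$ is a dense open subscheme of $M$.'' You have supplied the details the paper omits --- density after base change, the separatedness principle, and the check that multiplicativity persists --- but the approach is the same.
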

\begin{proof}
Follows easily from the fact that $G$ is a dense open subscheme of $M$.
\end{proof}

\begin{lemma}\label{AutGm}
Over any field we have $\Aut(\G_m^n) \simeq \GL(n, \Z)$.
\end{lemma}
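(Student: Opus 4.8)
The plan is to identify the monoid automorphisms of $\G_m^n\otimes_K L$ with the automorphisms of the character lattice $\Z^n$. Since $\G_m^n$ is affine, an automorphism of $\G_m^n\otimes_K L$ preserving the monoidal structure is the same thing as an automorphism of its coordinate Hopf $L$-algebra $R:=L[x_1^{\pm 1},\dots,x_n^{\pm 1}]=L[\Z^n]$, where the comultiplication is determined by $\Delta(x_i)=x_i\otimes x_i$ and the counit by $\varepsilon(x_i)=1$. By Yoneda such an endomorphism is determined by the images $u_i\in R^\times$ of the generators $x_i$; and, since $\Delta$ and $\varepsilon$ are algebra maps, it respects the monoid structure if and only if each $u_i$ is grouplike, i.e.\ $\Delta(u_i)=u_i\otimes u_i$ and $\varepsilon(u_i)=1$.

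First I would recall the structure of the unit group of a Laurent polynomial ring over a field (more generally, over a domain): $R^\times=L^\times\cdot\{x^{\mathbf a}:\mathbf a\in\Z^n\}$, where $x^{\mathbf a}:=x_1^{a_1}\cdots x_n^{a_n}$. This is the one genuinely non-formal input; it follows by an easy induction on $n$, comparing highest and lowest exponents in each variable. Writing $u_i=c_i\,x^{\mathbf a_i}$ with $c_i\in L^\times$ and $\mathbf a_i\in\Z^n$, the grouplike condition forces $c_i=c_i^2$, hence $c_i=1$. Thus every monoid endomorphism of $\G_m^n\otimes L$ has the form $x_i\mapsto\prod_j x_j^{a_{ij}}$ for a uniquely determined matrix $A=(a_{ij})\in M_n(\Z)$, and conversely every such $A$ defines one. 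A direct computation shows that composition of endomorphisms corresponds to multiplication of the matrices (possibly in the opposite order, depending on conventions), so this yields an isomorphism, or anti-isomorphism, of monoids between $M_n(\Z)$ and $\mathrm{End}(\G_m^n\otimes L)$.

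Passing to invertible elements on both sides, the endomorphism attached to $A$ is an automorphism if and only if $A$ is invertible in $M_n(\Z)$, that is $A\in\GL(n,\Z)$ --- if $A$ and $B$ are the matrices of an automorphism and its inverse then $AB=BA=I_n$, and the converse is clear. Hence $\Aut(\G_m^n)(L)\cong\GL(n,\Z)$; and because the formulas $x_i\mapsto\prod_j x_j^{a_{ij}}$ involve no coefficients from $L$, this identification is natural with respect to field extensions $L\hookrightarrow L'$ and compatible with the Galois action, so $\Aut(\G_m^n)$ is the constant functor $\GL(n,\Z)$. The only obstacle of any substance is the description of $R^\times$ recalled above; alternatively, one may bypass the computation entirely by invoking the anti-equivalence between diagonalizable group schemes over a field and finitely generated abelian groups, under which $\G_m^n$ corresponds to $\Z^n$ and the statement is immediate.
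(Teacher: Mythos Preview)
Your proof is correct. Both you and the paper arrive at the same bijection $\Aut(\G_m^n)\simeq\GL(n,\Z)$, but by dual routes: the paper argues on the geometric side, evaluating an automorphism $\alpha$ at the one-parameter subgroups $(1,\ldots,x,\ldots,1)$ to read off an integer matrix, and then writes down the inverse map explicitly; you instead pass to the coordinate Hopf algebra $L[\Z^n]$ and identify monoid endomorphisms with assignments $x_i\mapsto u_i$ for grouplike units $u_i$. Your approach has the advantage of making the key step transparent---the paper simply asserts that $\alpha(1,\ldots,x,\ldots,1)$ has the form $(x^{\alpha_{1i}},\ldots,x^{\alpha_{ni}})$, which tacitly uses that $\Hom(\G_m,\G_m)=\Z$, whereas you derive this from the structure of $R^\times$ together with the grouplike condition. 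You also address the functorial/constant nature of $\Aut(\G_m^n)$ explicitly, which the paper leaves implicit. The paper's argument is quicker to write down; yours is more self-contained.
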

\begin{proof}
Consider an authomorphism $\alpha \in \Aut(\G_m^n)$. So
\[ \alpha (1  \dots \underset{\substack{\uparrow \\ \text{i-th place}}} x  \dots 1) = (x^{\alpha_{1i}} \ldots x^{\alpha_{ii}} \dots  x^{\alpha_{ni}}) \text{ for all } 1 \leq i \leq n.
\]
Now let us construct map $f: \Aut(\G_m^n) \longrightarrow \GL(n, \Z)$ by the rule $f(\alpha) = (\alpha_{i,j})_{i,j=1}^n$: clearly $(\alpha_{i,j})_{i,j=1}^n \in \GL(n, \Z)$ since $\alpha$ is bijective. Now fix $A = (\beta_{i,j})_{i,j=1}^n \in \GL(n, \Z)$ and construct $\beta \in \Aut(\G_m^n)$ by the rule
\[ \beta(x_1, \dots , x_n) = (\prod \limits_i x_i^{\beta_{1i}}, \dots , \prod \limits_i x_i^{\beta_{ni}}), \]

Obviously $\beta \in \Aut(\G_m^n)$, so we obtain $g : \GL(n, \Z) \longrightarrow \Aut(\G_m^n)$ by putting $g(A) = \beta$. Now it remains to observe that $fg = id$, $gf = id$, and that $f $ is a morphism of algebraic groups: all these facts could be checked by direct computations.
\end{proof}

\section{Three particular cases}

In this section, we consider the cases of rank 0, corank 0 and corank 1.

\begin{proposition}\label{rank0}
Any commutative monoid on $\A^n$ of rank 0 is isomorphic to $\G_a^n$, i.e.
$$(x_1,\ldots,x_n)*(y_1,\ldots,y_n)=(x_1+y_1,\ldots,x_n+y_n).$$
\end{proposition}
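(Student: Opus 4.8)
The plan is to reduce the statement to the algebraically closed case and then transfer it back to $K$ by Galois descent together with Hilbert's Theorem~90. First I would base change to $\overline{K}$. Since the underlying variety $\A^n_K$ is geometrically irreducible, Lemma~2.2 applies and shows that the group $G$ of invertible elements of $M$ is a dense open subscheme of $M$; in particular $G$ is irreducible of dimension $n$. By the fact recalled after Definition~2.3 we have $G\otimes_K\overline{K}\cong\G_m^r\times\G_a^s$ with $r=\mathrm{rank}(M)$ and $s=\mathrm{corank}(M)$; since $r=0$ and $r+s=\dim G=n$, this gives $G\otimes_K\overline{K}\cong\G_a^n$, which in particular is affine.

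The key geometric point is that over $\overline{K}$ the monoid agrees with its group of units, i.e. $G\otimes_K\overline{K}=M\otimes_K\overline{K}$. Let $Z$ be the reduced closed complement of $G\otimes_K\overline{K}$ in $M\otimes_K\overline{K}\cong\A^n_{\overline{K}}$. If $Z$ contained a hypersurface $V(f)$, then $f$ would restrict to an invertible regular function on $G\otimes_K\overline{K}\cong\A^n_{\overline{K}}$, whose only units are the nonzero constants, forcing $f$ to be constant --- a contradiction. Hence either $Z=\varnothing$ or $Z$ has codimension $\geq 2$; in the latter case restriction of functions is an isomorphism $\mathcal{O}(\A^n_{\overline{K}})\xrightarrow{\ \sim\ }\mathcal{O}(G\otimes_K\overline{K})$ (regular functions on the normal variety $\A^n$ extend across a closed subset of codimension $\geq 2$), and since both schemes are affine the open immersion $G\otimes_K\overline{K}\hookrightarrow M\otimes_K\overline{K}$ is then an isomorphism, so again $Z=\varnothing$. (Alternatively, this is a special case of the classification in \cite{1}.) Thus $M\otimes_K\overline{K}$ is a group, isomorphic as an algebraic monoid to $\G_a^n$; spreading this isomorphism out over a suitable finite Galois subextension $L/K$, we see that $M$ is a twisted $L/K$-form of the split monoid $\G_a^n$.

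It remains to enumerate the twisted forms of $\G_a^n$. An automorphism of $\G_a^n\otimes_K L$ preserving the monoidal structure fixes the unit $0$ and hence is a group automorphism; over a field of characteristic zero every additive morphism $\A^1\to\A^1$ is linear, so an endomorphism of $\G_a^n$ is given by an $n\times n$ matrix and $\Aut(\G_a^n)\cong\GL(n,-)$ as functors, compatibly with the Galois action. By Hilbert's Theorem~90 (Theorem~2.11) the pointed set $H^1(\Gal_{\overline{K}/K},\GL(n,-))$ is trivial, and by the correspondence between twisted forms and Galois cohomology recalled in Section~2.3 this means $\G_a^n$ admits no nontrivial twisted form. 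Since $M$ is a twisted form of $\G_a^n$, we conclude that $M\cong\G_a^n$ over $K$, with the multiplication as stated.

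The step I expect to be the main obstacle is the second paragraph: showing that in the rank $0$ case the monoid has no ``boundary'', i.e. that a dense open copy of $\A^n$ sitting inside $\A^n$ must be the whole space. Once that is settled, the remainder is a direct application of the descent formalism of Section~2 and Hilbert~90.
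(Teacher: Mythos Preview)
Your proof is correct and follows the same route as the paper: reduce to $\overline{K}$ where the monoid is $\G_a^n$, identify $\Aut(\G_a^n)$ with $\GL(n,-)$, and conclude by Hilbert~90 that there are no nontrivial twisted forms. The only difference is that the paper dispatches your second paragraph in one line by invoking \cite[Proposition~1]{1}, whereas you supply a self-contained argument (units of $\A^n$ are constants, plus Hartogs); this is fine and arguably more informative.
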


\begin{proof}
From Proposition 1 of \cite{1} it follows that such monoid M becomes isomorphic to $\G_a^n$ after a field extension. Automorphisms group of $\G_a^n$ is $\GL(n,-)$. By Hilbert's Theorem 90, its first Galois cohomology set is trivial. Therefore, $\G_a^n$ has no nontrivial twisted forms. Hence any such monoid is isomorphic to $\G_a^n$ already over the base field.  
\end{proof}

\begin{proposition}\label{corank0}
Any commutative monoid on $\A^n$ of corank 0 is isomorphic to the multiplicative monoid of a separable algebra over $K$ (i.e. a direct product of field extensions). Moreover, non-isomorphic algebras give rise to non-isomorphic monoids. 
\end{proposition}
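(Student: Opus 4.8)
The plan is to exhibit every corank-$0$ monoid on $\A_K^n$ as a twisted form of the split one, compute its automorphism group scheme, and identify the resulting Galois cohomology with the Galois-theoretic classification of separable algebras. By the classification over $\overline{K}$ obtained in \cite{1}, a commutative monoid of corank $0$ on $\A_{\overline{K}}^n$ is isomorphic to the split monoid $M_0$ on $\A^n$ with coordinatewise multiplication $(x_i)*(y_i)=(x_iy_i)$, which is precisely the multiplicative monoid of the split separable algebra $\overline{K}\times\cdots\times\overline{K}$. Hence any commutative monoid $M$ of corank $0$ on $\A_K^n$ is a twisted $\overline{K}/K$-form of $M_0$, and by the dictionary of Section~2.3 its isomorphism class is determined by a class in $H^1(\Gal_{\overline{K}/K},\Aut(M_0))$.

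Next I would compute $\Aut(M_0)$. Its group of invertible elements is $\G_m^n$, so by Lemma~\ref{Aut} the functor $\Aut(M_0)$ is the subfunctor of $\Aut(\G_m^n)$ consisting of those automorphisms $g$ for which both $g$ and $g^{-1}$ extend to everywhere-defined morphisms of $\A^n$ (the birationality condition of Lemma~\ref{Aut} being automatic). Under the identification $\Aut(\G_m^n)\cong\GL(n,\Z)$ of Lemma~\ref{AutGm}, a matrix $A$ defines a morphism regular on all of $\A^n$ exactly when all its entries are non-negative; thus the image consists of matrices $A\in\GL(n,\Z)$ with $A\geq 0$ and $A^{-1}\geq 0$. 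A short combinatorial argument (comparing entry-sums in $AA^{-1}=I$: the column-sum vector of $A$ and the row-sum vector of $A^{-1}$ have positive integer entries and pair to $n$, hence are all $1$, so every column of $A$ carries a single $1$) shows that $A$ must be a permutation matrix. Therefore $\Aut(M_0)$ is the constant group scheme $S_n$ acting on $M_0$ by permutation of coordinates, and the corank-$0$ monoids on $\A_K^n$ are classified by $H^1(\Gal_{\overline{K}/K},S_n)$, i.e.\ by conjugacy classes of continuous homomorphisms $\Gal_{\overline{K}/K}\to S_n$.

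The $n$-dimensional separable $K$-algebras are classified by the \emph{same} pointed set: since the characteristic is zero, such an algebra becomes $\overline{K}\times\cdots\times\overline{K}$ after base change to $\overline{K}$, its automorphism group scheme is the constant $S_n$ permuting the primitive idempotents, and Galois descent for these algebras produces a bijection with $H^1(\Gal_{\overline{K}/K},S_n)$. To finish I would use the functor $A\mapsto(\text{multiplicative monoid of }A)$ from $n$-dimensional separable $K$-algebras to corank-$0$ monoids on $\A_K^n$: it commutes with base change, sends the split algebra to $M_0$, and the induced map of automorphism group schemes $S_n\to S_n$ is (up to inversion) the identity. By naturality of the twisted-form/$H^1$ correspondence the functor therefore induces a bijection on isomorphism classes, which yields simultaneously that every corank-$0$ monoid is the multiplicative monoid of some separable $K$-algebra and that non-isomorphic algebras give non-isomorphic monoids. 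Concretely, the form attached to a cocycle $f\colon\Gal_{\overline{K}/K}\to S_n$ is the multiplicative monoid of $\{a\in\overline{K}^{\,n}\mid f(\sigma)^{-1}(a)=\sigma(a)\text{ for all }\sigma\}$, as one reads off from the descent formula $M'(B)=\{x\in M_0\otimes L(B\otimes L)\mid f(\sigma)^{-1}(x)=\sigma(x)\}$.

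The step I expect to be the main obstacle is the determination of $\Aut(M_0)$: one must be careful that Lemma~\ref{Aut} identifies monoid automorphisms with the genuinely \emph{regular} monomial maps rather than merely birational ones, and then carry out cleanly the elementary argument that the only elements of $\GL(n,\Z)$ that are non-negative together with their inverses are permutation matrices. Once $\Aut(M_0)$ is known to be the constant $S_n$ matching the automorphism scheme of the split separable algebra, everything else is transport of structure through a manifestly compatible functor.
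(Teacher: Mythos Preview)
Your proposal is correct and follows essentially the same approach as the paper: reduce to the split monoid via \cite{1}, compute $\Aut(M_0)$ by showing via Lemmas~\ref{Aut} and~\ref{AutGm} that the only matrices in $\GL(n,\Z)$ which are non-negative together with their inverses are permutation matrices, and then match the resulting $H^1(\Gal_{\overline{K}/K},S_n)$ with the classification of separable algebras via the multiplicative-monoid functor. Your explicit column-sum/row-sum argument for the permutation-matrix step is a nice addition---the paper simply asserts this as easy to see.
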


\begin{proof}
It follows from \cite[Proposition 1]{1} that any such monoid becomes isomorphic to $$(x_1,\ldots,x_n)*(y_1,\ldots,y_n)=(x_1y_1,\ldots,x_ny_n).$$
after a field extension.

Let us denote by $M$ the monoid defined by that last formula. Its group of invertible elements is $\G_m^n$. By Lemma 2.12 $\Aut (\G_m^n)$ is a constant group scheme $\GL(n, \Z)$. A matrix from $\GL(n, \Z)$ induces a regular map on $M$ if and only if all its entries are non-negative (since it should be a regular map on {\it monoids}, not groups). Therefore, by Lemma \ref{Aut} we obtain that the automorphisms of $M$ are matrices $g\in \GL(n, \Z)$ such that both $g$ and $g^{-1}$ have all the entries non-negative. It is easy to see that these are precisely the permutation matrices. Thus $\Aut(M)=S_n$ and it acts on $M$ by permutations of coordinates.

Note that $M$ is a multiplicative monoid of the algebra $K^n$. Any twisted form of $M$ is obtained by Galois descent with cocycle in $H^1(\mathrm{\Gal_{L/K}},S_n)$ for some finite Galois extension $L/K$. Since any permutation of coordinates is in fact an automorphism of the algebra $L^n$, this cocycle $f$ actually gives a twisted form $A_f$ of the algebra $K^n$ and our monoid is a multiplicative monoid of $A_f$. Finally one can note that any twisted form of the algebra $K^n$ is a direct product of field extensions, so $A_f$ is a separable algebra. Hence the first statement holds.

The second statement follows from the fact that both algebras and monoids are classified by $H^1(\mathrm{\Gal_{\overline{K}/K}},S_n)$.
\end{proof}

\begin{example}
Let $K=\R$ and $n=2$. Then any monoid on $\A^2_{\R}$ of corank 0 is isomorphic either to componentwise multiplication
$$(x_1,x_2)*(y_1,y_2)=(x_1y_1,x_2y_2),$$
or to multiplication of complex numbers
$$(x_1,x_2)*(y_1,y_2)=(x_1y_1-x_2y_2,x_1y_2+x_2y_1).$$
\end{example}

\begin{example}
Let $K=\Q$ and $n=2$. Then any monoid on $\A^2_{\Q}$ of corank 0 is isomorphic either to componentwise multiplication or to multiplication in $\Q(\sqrt{d})$
$$(x_1,x_2)*(y_1,y_2)=(x_1y_1+dx_2y_2,x_1y_2+x_2y_1),$$
where $d \in \mathbb{Q}$ is not a square. Monoids for $d_1$ and $d_2$ are isomorphic iff the fields $\Q(\sqrt{d_1})$ and $\Q(\sqrt{d_2})$ are isomorphic, i.e iff $d_1/d_2$ is a square.
\end{example}

\begin{proposition}\label{corank1}
For any commutative monoid on $\A^n$ of corank 1 there exists a collection of separable algebras $R_1$,$\ldots$,$R_k$ of dimensions $d_1$,$\ldots$, $d_k$ and integers $0\le c_1<\ldots<c_k$ such that $d_1+\ldots+d_k=n-1$ and the monoid structure is given by
\begin{multline}
(\overline{x}_1,\ldots,\overline{x}_k,x)*(\overline{y}_1,\ldots,\overline{y}_k,y)=(\overline{x}_1\overline{y}_1,\ldots,\overline{x}_k\overline{y}_k,N(\overline{x}_1)^{c_1}\ldots N(\overline{x}_k)^{c_k}y+\\+N(\overline{y}_1)^{c_1}\ldots N(\overline{y}_k)^{c_k}x),\label{Norms}
\end{multline}

\noindent  where $\overline{x_i}$, $\overline{y_i}$ are $d_i$-tuples of coordinates, $\overline{x}_i\overline{y}_i$ is calculated by formula that defines multiplication in $R_i$ in some fixed basis, and $N(\overline{x}_i)$, $N(\overline{y}_i)$ are calculated by formula that computes norm in $R_i$. Moreover, numbers $d_i$, $c_i$ and isomorphism classes of $R_i$ are defined uniquely by the isomorphism class of the monoid. 
\end{proposition}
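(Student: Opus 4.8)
The plan is to follow the scheme already used for Propositions \ref{rank0} and \ref{corank0}: reduce to a split model over $\overline{K}$, compute its automorphism functor, and classify the twisted forms by Galois cohomology.

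\emph{Reduction to a split model.} By \cite[Proposition 1]{1}, after the base change to $\overline{K}$ the monoid becomes isomorphic to the split model $M_{\mathbf{e}}$ with
$$
(x_1,\ldots,x_{n-1},x)*(y_1,\ldots,y_{n-1},y)=\left(x_1y_1,\ldots,x_{n-1}y_{n-1},\ \Bigl(\prod_{j}x_j^{e_j}\Bigr)y+\Bigl(\prod_{j}y_j^{e_j}\Bigr)x\right),
$$
for some integers $0\le e_1\le\ldots\le e_{n-1}$, and different multisets $\{e_j\}$ give non-isomorphic models over $\overline{K}$ (\cite{1}). Grouping the equal exponents produces blocks of sizes $d_1,\ldots,d_k$ with $d_1+\ldots+d_k=n-1$ and common values $0\le c_1<\ldots<c_k$, so that $\prod_j x_j^{e_j}=\prod_i N_i(\overline{x}_i)^{c_i}$, where $N_i$ is the product of the coordinates in the $i$-th block.

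\emph{The automorphism functor.} The group of invertible elements $G$ of $M_{\mathbf{e}}$ is $\G_m^{n-1}\times\G_a$: the substitution $t=\bigl(\prod_j x_j^{-e_j}\bigr)x$ (not regular on $\A^n$) turns the group law into the componentwise one. Since in characteristic zero there is no nonzero homomorphism $\G_m\to\G_a$ or $\G_a\to\G_m$, an automorphism of $G$ preserves the maximal torus $\G_m^{n-1}$ and induces an automorphism of $G/\G_m^{n-1}\cong\G_a$, and one checks that $\Aut(G)\cong\GL(n-1,\Z)\times\G_m$, the first factor acting on the torus as in Lemma \ref{AutGm} and the second scaling the $\G_a$-coordinate. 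By Lemma \ref{Aut}, $\Aut(M_{\mathbf{e}})$ consists of those $(A,\lambda)$ for which the induced birational map of $M_{\mathbf{e}}$ and its inverse are regular. Writing out the action of $(A,\lambda)$ in the coordinates $(x_1,\ldots,x_{n-1},x)$: regularity of the map and of its inverse on the first $n-1$ coordinates forces $A$ to be a permutation matrix $P_\sigma$ (as in Proposition \ref{corank0}), after which the last coordinate transforms by $x\mapsto\lambda\bigl(\prod_i x_i^{(P_\sigma\mathbf{e})_i-e_i}\bigr)x$, and regularity of this together with its inverse forces $\sigma$ to preserve the exponent vector $\mathbf{e}$, i.e. to permute only within the blocks, while $\lambda$ stays unconstrained. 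A direct check shows that each such $(P_\sigma,\lambda)$ does extend to an automorphism. Hence $\Aut(M_{\mathbf{e}})\cong\G_m\times S_{d_1}\times\cdots\times S_{d_k}$, with $\G_m$ scaling the last coordinate and $S_{d_i}$ permuting the $i$-th block. I expect this step — especially ruling out, via the regularity condition, any automorphism whose torus part is not a block-preserving permutation — to be the main obstacle.

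\emph{Galois descent.} Since $\Aut(M_{\mathbf{e}})$ is a direct product of group-valued functors, $H^1(\Gal_{\overline{K}/K},\Aut(M_{\mathbf{e}}))=H^1(\Gal_{\overline{K}/K},\G_m)\times\prod_i H^1(\Gal_{\overline{K}/K},S_{d_i})$; the first factor is trivial by Hilbert's Theorem 90, and $H^1(\Gal_{\overline{K}/K},S_{d_i})$ is the set of isomorphism classes of $d_i$-dimensional separable $K$-algebras, as already used in the proof of Proposition \ref{corank0}. Given a twisted form, I would pick a representing cocycle $f$ with trivial $\G_m$-component and run the formula $M'(B)=\{x\in M_{\mathbf{e}}\otimes_K L(B\otimes_K L)\mid f(\sigma)^{-1}(x)=\sigma(x)\ \forall\sigma\in\Gamma\}$: the $i$-th component of $f$ twists the block $L^{d_i}$ into $R_i\otimes_K L$ for a separable $K$-algebra $R_i$, and since $S_{d_i}$ acts by algebra automorphisms the componentwise product of the block descends to multiplication in $R_i$ and the $S_{d_i}$-invariant polynomial $N_i$ descends to the norm $N_{R_i/K}$, while the untwisted last coordinate descends unchanged; this yields exactly the multiplication \eqref{Norms}. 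For uniqueness, the numbers $d_i,c_i$ are recovered from $M\otimes_K\overline{K}$ because the multiset $\{e_j\}$ is a $\overline{K}$-invariant; and once they are fixed, the isomorphism classes of the $R_i$ are determined since the twisted forms of $M_{\mathbf{e}}$ are in bijection with $H^1(\Gal_{\overline{K}/K},\Aut(M_{\mathbf{e}}))=\prod_i H^1(\Gal_{\overline{K}/K},S_{d_i})$, which is also the set classifying the tuples $(R_1,\ldots,R_k)$.
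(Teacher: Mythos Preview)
Your proposal is correct and follows essentially the same route as the paper: reduce to the split model of \cite[Proposition~1]{1}, compute $\Aut(M_{\mathbf{e}})\cong \G_m\times\prod_i S_{d_i}$ via Lemma~\ref{Aut} by showing the torus part must be a block-preserving permutation, and then use Hilbert's~90 to kill the $\G_m$-factor so that twisted forms are parametrized by $\prod_i H^1(\Gal_{\overline{K}/K},S_{d_i})$, i.e.\ by tuples of separable algebras. Your descent paragraph is in fact more explicit than the paper's (which simply asserts that the assignment is given by formula~\eqref{Norms}), and your uniqueness argument is the same.
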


\begin{proof}
It follows from \cite[Proposition 1]{1}, that after a field extension such monoid becomes isomorphic to $$
(x_1,\ldots,x_n)*(y_1,\ldots,y_n)=(x_1y_1,\ldots,x_{n-1}y_{n-1},x_1^{b_1}\ldots x_{n-1}^{b_{n-1}}y_n+y_1^{b_1}\ldots y_{n-1}^{b_{n-1}}x_n)
$$
for some uniquely defined integers $0\le b_1\le\ldots\le b_{n-1}$.

Let us denote by $M$ the monoid defined by the last formula. Its group of invertible elements is $G=\G_m^{n-1}\times \G_a$. Since there are no nontrivial homomorphisms between $\G_a$ and $\G_m$, we have $\Aut(G)=\GL(n-1,\Z)\times \G_m$, where $\GL(n-1,\Z)$ is a constant group scheme.

It is easy to see that in order for the birational morphism of $M$ defined by $g\in \GL({n-1},\Z)\times \G_m$ to be regular, it is necessary that its first component has all entries non-negative. Matrices $g\in \GL(n-1, \Z)$, such that both $g$ and $g^{-1}$ have only non-negative entries, are permutation matrices. So by Lemma \ref{Aut} we obtain that the image of the restriction map $\Aut(M)\to \Aut(G)$ is contained in $S_{n-1}\times \G_m$.

Fix $(\sigma,z)\in S_{n-1}\times\G_m$. Let us calculate the corresponding birational map. The embedding $i\colon \G_m^{n-1}\times \G_a \to M=\A^n$ is given by 
$$
i(t_1,\ldots,t_{n-1},v)= (t_1,\ldots,t_{n-1},t_1^{b_1}\ldots t_{n-1}^{b_{n-1}}v).
$$

Thus such map is $i\circ (\sigma,z)\circ (i^{-1})$. It is easy to see that
$$
i^{-1}(x_1,\ldots,x_{n-1},x_n)=(x_1,\ldots,x_{n-1},x_1^{-b_1}\ldots x_{n-1}^{-b_{n-1}}x_n),
$$
so
$$
(\sigma,z)\cdot (x_1,\ldots,x_{n-1}x_1^{-b_1}\ldots x_{n-1}^{-b_{n-1}}x_n)=(x_{\sigma^{-1}(1)},\ldots,x_{\sigma^{-1}(n-1)},zx_1^{-b_1}\ldots x_{n-1}^{-b_{n-1}}x_n),
$$
finally
$$
i(x_{\sigma^{-1}(1)},\ldots,x_{\sigma^{-1}(n-1)},zx_1^{-b_1}\ldots x_{n-1}^{-b_{n-1}}x_n)=\left(x_{\sigma^{-1}(1)},\ldots,x_{\sigma^{-1}(n-1)},z\prod x_i^{b_{\sigma^{-1}(i)}-b_i}x_n\right).
$$

For this map to be regular it is necessary and sufficient that $b_{\sigma^{-1}(i)}\ge b_i$ for all $i$. Since the permutation has finite order, this means that $b_{\sigma^{-1}(i)}= b_i$ for all $i$, or equivalently $b_{\sigma(i)}= b_i$ for all $i$.

Since $b_1\le\ldots\le b_{n-1}$, we may put
\[ b_1=\ldots=b_{d_1}=c_1, \quad b_{d_1+1}=\ldots=b_{d_1+d_2}=c_2,\]
$$\ldots$$
\[b_{d_1+\ldots+d_{k-1}+1}=\ldots=b_{d_1+\ldots+d_k}=c_k,\]
where $\sum \limits_{i=1}^k d_i =n-1$ and $c_1<\ldots<c_k$. Note that $c_i$ and $d_i$ are defined uniquely by $b_i$.

Grouping together the variables that correspond to equal $b_i$, we can rewrite the multiplication in the monoid $M$ as follows
\begin{multline*}
(\overline{x}_1,\ldots,\overline{x}_k,x)*(\overline{y}_1,\ldots,\overline{y}_k,y)=(\overline{x}_1\overline{y}_1,\ldots,\overline{x}_k\overline{y}_k,N(\overline{x}_1)^{c_1}\ldots N(\overline{x}_k)^{c_k}y+\\+N(\overline{y}_1)^{c_1}\ldots N(\overline{y}_k)^{c_k}x),
\end{multline*}
where $\overline{x_i}$, $\overline{y_i}$ are $d_i$-tuples of coordinates, $\overline{x}_i \overline{y}_i$ is the componentwise multiplication, and $N(\overline{x}_i)$ is the product of all entries of the $d_i$-th tuple $\overline{x}_i$ (i.e. the norm in the algebra $L^n$).

It follows from Lemma \ref{Aut} and facts proved above that $\Aut(M)=\prod S_{d_i}\times \G_m$, where permutations act by permuting coordinates inside $d_i$-tuples, and $\G_m$ acts by scaling the last coordinate.

When $d_i$ and $c_i$ are fixed, the twisted forms of $M$ are classified by the set $H^1(\Gal_{\overline{K}/K},\prod S_{d_i}\times \G_m)$. On the other hand, the set $H^1(\Gal_{\overline{K}/K},\prod S_{d_i})$ classifies collections of separable algebras $R_1$,$\ldots$,$R_k$ of dimensions $d_1$,$\ldots$,$d_k$. The embedding $\prod S_{d_i} \hookrightarrow \prod S_{d_i}\times \G_m$ induces the map
$$
f\colon H^1(\Gal_{\overline{K}/K},\prod S_{d_i}) \longrightarrow H^1(\Gal_{\overline{K}/K},\prod S_{d_i}\times \G_m).
$$

To any collection of algebras $R_1$,$\ldots$,$R_k$ of dimensions $d_1$,$\ldots$,$d_k$ this  map assigns a twisted form $M'$ of $M$. It is easy to see that this assignment gives us the monoid with the multiplication given by the formula (\ref{Norms}) via the collection  $R_1$,$\ldots$,$R_k$, so this map is defined by the formula (\ref{Norms}). It remains to notice that cohomology, as functor of coefficients, preserve products; and  $H^1(\Gal_{\overline{K}/K},\G_m)$ is trivial by Hilbert's Theorem 90, so the map $f$ is bijective.
\end{proof}

\begin{example}
Let $K=\R$, $n=3$, $k=1$, $d_1=2$, and $R_1=\CC$. Then the corresponding monoid structure on $\A_{\R}^3$ is given by
\[
(x_1,x_2,x_3)*(y_1,y_2,y_3)=(x_1y_1-x_2y_2,x_1y_2+x_2y_1,(x_1^2+x_2^2)^cy_3+(y_1^2+y_2^2)^cx_3)
\]
for some $c \in \mathbb{Z}_{\ge 0}$.
\end{example}

\begin{example}
Let $K=\Q$, $n=3$, $k=1$, $d_1=2$, and $R_1=\Q(\sqrt{d})$ for $d \in \mathbb{Q}$. Then the corresponding monoid structure on $\A_{\Q}^3$ is given by
\[
(x_1,x_2,x_3)*(y_1,y_2,y_3)=(x_1y_1+dx_2y_2,x_1y_2+x_2y_1,(x_1^2-dx_2^2)^cy_3+(y_1^2-dy_2^2)^cx_3)
\]
for some $c \in \mathbb{Z}_{\ge 0}$.
\end{example}

\section{Main results for $\A_K^1$ and $\A_K^2$}
Since all commutative monoid structures on $\A_K^1$ and $\A_K^2$ are covered by Propositions \ref{rank0}, \ref{corank0}, and \ref{corank1} we obtain the following classification result.

\begin{theorem}
1)Every commutative monoid on $\A_K^1$ is isomorphic to one of the following monoids:
\begin{gather*}
    A\colon\quad (x)*(y)=(x+y);\\
    M\colon\quad (x)*(y)=(xy).
\end{gather*}

2)Every commutative monoid on $\A_K^2$ is isomorphic to one of the following monoids:

\medskip

\begin{tabular}{c|c|cc}
    rank & Notation  &  $(x_1, x_2)*(y_1, y_2)$ \\
    \hline
       \hline
    0    & $2A$        &  $(x_1+y_1,x_2+y_2)$\\
       \hline
    1    & $M\underset{b}{+}A$       &  $(x_1y_1,x_1^by_2+y_1^bx_2)$, & $b\in \Z_{\ge 0}$\\
       \hline
    2    & $M+M$       &  $(x_1y_1,x_2y_2)$\\
       \hline
    2    & $M(L)$      & Multiplicative monoid of a quadratic extension $L/K$.
\end{tabular}

\medskip

Moreover these monoids are pairwise non-isomorphic.
\end{theorem}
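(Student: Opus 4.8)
The plan is to observe that Propositions \ref{rank0}, \ref{corank0} and \ref{corank1} together cover every commutative monoid on $\A_K^1$ and on $\A_K^2$, and then to read off the pairwise non-isomorphism from the uniqueness assertions already contained in those propositions.

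First I would record the relevant dimension count. The group $G$ of invertible elements of a monoid $M$ on $\A_K^n$ is an open subscheme of $M$; it is nonempty (it contains $e$), hence dense, so $\dim G=\dim M=n$. After the base change to $\overline{K}$ one has $G_{\overline{K}}\cong\G_m^r\times\G_a^s$ with $r=\mathrm{rank}(M)$ and $s=\mathrm{corank}(M)$, and therefore $r+s=n$. Hence every monoid on $\A_K^1$ has rank $0$ or corank $0$, and every monoid on $\A_K^2$ has rank $0$, corank $1$, or corank $0$; in each situation exactly one of the three propositions applies.

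Next I would specialize. On $\A_K^1$, Proposition \ref{rank0} yields $A$ in the rank-$0$ case, and Proposition \ref{corank0} yields, in the corank-$0$ case, the multiplicative monoid of a $1$-dimensional separable $K$-algebra; the only such algebra is $K$ itself, giving $M$. On $\A_K^2$: Proposition \ref{rank0} yields $2A$ in rank $0$; Proposition \ref{corank0} yields, in corank $0$, the multiplicative monoid of a $2$-dimensional separable $K$-algebra, and these algebras are exactly $K\times K$ (giving $M+M$) and the separable quadratic field extensions $L/K$ (giving $M(L)$); in corank $1$, Proposition \ref{corank1} applies with $n=2$, which forces $k=1$, $d_1=1$, $R_1=K$, so that formula (\ref{Norms}) reduces to $(x_1,x_2)*(y_1,y_2)=(x_1y_1,\,x_1^{c_1}y_2+y_1^{c_1}x_2)$ with $c_1\in\Z_{\ge 0}$, i.e. to $M\underset{b}{+}A$.

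Finally, for non-isomorphism: rank is an isomorphism invariant, so monoids of distinct rank are non-isomorphic, which separates $A$ from $M$ and separates $2A$, the family $M\underset{b}{+}A$, and $\{M+M\}\cup\{M(L)\}$ from one another. Within rank $1$ on $\A_K^2$, the uniqueness clause of Proposition \ref{corank1} says the exponent $c_1$ is an invariant of the isomorphism class, so $M\underset{b}{+}A\cong M\underset{b'}{+}A$ forces $b=b'$. Within rank $2$, the last sentence of Proposition \ref{corank0} gives that non-isomorphic separable algebras produce non-isomorphic monoids; since $K\times K$ is not a field, it is not isomorphic to any quadratic extension, and $M(L)\cong M(L')$ iff $L\cong L'$ as $K$-algebras. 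The argument is essentially bookkeeping on top of the three propositions; the one point needing care is the identity $\mathrm{rank}(M)+\mathrm{corank}(M)=n$ that makes the propositions exhaustive, together with the correct handling of the degenerate instances of Proposition \ref{corank1} in low dimension.
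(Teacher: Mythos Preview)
Your proof is correct and follows exactly the approach indicated in the paper, which simply observes that Propositions \ref{rank0}, \ref{corank0}, and \ref{corank1} exhaust all ranks in dimensions $1$ and $2$. You have additionally spelled out the bookkeeping (the identity $r+s=n$, the specialization of each proposition, and the non-isomorphism argument via the uniqueness clauses), which the paper leaves implicit.
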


\begin{corollary}
Assume that $K=\R$. Then every commutative monoid on $\A_{\R}^1$ or $\A_{\R}^2$ is isomorphic either to one of the monoids described in \cite[Proposition 2]{1}, or to one in Example 3.3.
\end{corollary}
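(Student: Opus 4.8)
The plan is to obtain the statement as a direct specialization of Theorem~4.1 to the field $K=\R$, matching each entry of that classification against the two reference lists. First I would invoke Theorem~4.1: every commutative monoid on $\A^1_\R$ is isomorphic to $A$ or $M$, and every commutative monoid on $\A^2_\R$ is isomorphic to one of $2A$, $M\underset{b}{+}A$ for some $b\in\Z_{\ge 0}$, $M+M$, or $M(L)$ for a quadratic field extension $L/\R$. So it suffices to locate each of these finitely many possibilities in one of the two target families.

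Next I would observe that $A$, $M$, $2A$, $M\underset{b}{+}A$ and $M+M$ are precisely the \emph{split} monoids of Section~2.4: they are given by formulae with integer coefficients, and one checks that these formulae agree, after the evident translation of notation ($A\leftrightarrow\G_a$, $M\leftrightarrow\G_m$, and $M\underset{b}{+}A$ with the corresponding structure on $\G_m\times\G_a$), with the monoid structures listed in \cite[Proposition~2]{1}. Hence each of these monoids satisfies the first alternative of the corollary. Here I should be a little careful to verify that the two lists of split structures genuinely coincide rather than merely being ``of the same shape''; this is the only point requiring any bookkeeping.

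It then remains to deal with $M(L)$, the multiplicative monoid of a quadratic extension $L/\R$. Up to $\R$-isomorphism, $\R$ has the unique quadratic extension $\CC$, so $M(L)\cong M(\CC)$; and unwinding the definition of the multiplicative monoid of $\CC$ (cf. Proposition~\ref{corank0}) shows that $M(\CC)$ is the monoid on $\A^2_\R$ with multiplication $(x_1,x_2)*(y_1,y_2)=(x_1y_1-x_2y_2,\,x_1y_2+x_2y_1)$, which is exactly the structure exhibited in Example~3.3. This settles the second alternative, and together with the previous paragraph it proves the corollary.

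There is no genuine mathematical obstacle here: everything follows from Theorem~4.1 together with the elementary fact that $[\CC:\R]=2$ exhausts the quadratic extensions of $\R$, the remaining work being purely the notational comparison with \cite[Proposition~2]{1} described above.
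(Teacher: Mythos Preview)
Your proposal is correct and matches the paper's approach: the paper states this corollary without proof, treating it as an immediate specialization of Theorem~4.1 together with the observation (Example~3.3) that the only quadratic extension of $\R$ is $\CC$. Your write-up simply makes explicit the bookkeeping the paper leaves to the reader.
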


\begin{corollary}
Assume that $K=\Q$. Then every commutative monoid on $\A_{\Q}^1$ or $\A_{\Q}^2$ is isomorphic either to one of the monoids described in \cite[Proposition 2]{1}, or to one in Example 3.4.
\end{corollary}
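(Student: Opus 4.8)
\emph{Proof proposal.} The plan is to read the corollary off from Theorem 4.1 by translating its uniform list of monoids into the two referenced sources once $K=\Q$ is fixed. First I would split the monoids occurring in Theorem 4.1 into two groups: those whose multiplication law has integer coefficients — namely $A$ and $M$ on $\A^1_\Q$ (the rank~$0$ and corank~$0$ cases, Propositions \ref{rank0} and \ref{corank0} with $n=1$), and $2A$, the family $M\underset{b}{+}A$ with $b\in\Z_{\ge 0}$ (Proposition \ref{corank1} with $n=2$), and $M+M$ on $\A^2_\Q$ — and on the other hand the structures $M(L)$ attached to a quadratic field extension $L/\Q$.

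For the first group I would argue that these are exactly the base changes to $\Q$ of the commutative monoid structures on $\A^1$ and $\A^2$ classified in \cite[Proposition 2]{1} over an algebraically closed field of characteristic zero: every structure appearing there is given by a formula with integer coefficients (this is the split case discussed in Section~2.4), and conversely each split structure in dimension $\le 2$ occurs in that list. Hence, for these monoids, ``isomorphic to a monoid of \cite[Proposition 2]{1}'' is literally the assertion that it is one of $A$, $M$, $2A$, $M\underset{b}{+}A$, $M+M$.

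For the second group I would invoke the corank-$0$ classification of Proposition \ref{corank0} as made explicit in Example~3.4: a quadratic extension of $\Q$ is necessarily of the form $\Q(\sqrt d)$ for a non-square $d$, its multiplicative monoid is the one displayed in Example~3.4, and $\Q(\sqrt{d_1})\cong\Q(\sqrt{d_2})$ precisely when $d_1/d_2$ is a square — which is exactly the identification of the $M(L)$'s provided by Theorem 4.1. Combining the two groups, and using the pairwise non-isomorphism already recorded in Theorem 4.1, yields the ``either $\ldots$ or $\ldots$'' statement.

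The only real work here is bookkeeping rather than new mathematics: one must check that the split part of the list in Theorem 4.1 coincides with \cite[Proposition 2]{1} with nothing omitted or double-counted, and that the squarefree-$d$ description of quadratic extensions of $\Q$ in Example~3.4 is exhaustive. No ingredient beyond Theorem 4.1, Proposition \ref{corank0}/Example~3.4, and Hilbert's Theorem~90 (already used in the proof of Theorem 4.1) is required, so I expect the argument to be short.
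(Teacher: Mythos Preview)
Your proposal is correct and matches the paper's approach: the corollary is stated without proof in the paper because it is an immediate specialization of Theorem~4.1 to $K=\Q$, with the split monoids identified with those of \cite[Proposition~2]{1} and the non-split rank~$2$ case $M(L)$ identified with Example~3.4. Your bookkeeping is exactly what the paper leaves implicit.
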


\section{Main results for $\A_K^3$}
Now we consider the case $n=3$.
\begin{proposition}\label{bc}
Let $b,c\in \Z$, $0 \le b\le c$. Then the commutative monoid $M$ on $\A_K^3$ given by
$$
(x_1,x_2,x_3)*(y_1,y_2,y_3)=(x_1y_1,x_1^by_2+y_1^bx_2,x_1^cy_3+y_1^cx_3)
$$
has no nontrivial twisted forms.
\end{proposition}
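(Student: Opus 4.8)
The plan is to follow the method of Propositions~\ref{corank0} and~\ref{corank1}: identify the group $G$ of invertible elements of $M$ together with its scheme structure, describe $\Aut(M)$ via Lemma~\ref{Aut}, and then check that the corresponding first Galois cohomology set is trivial. Note first that $M\cong\A^3_K$ is geometrically irreducible, so Lemma~\ref{Aut} applies.

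An element $(x_1,x_2,x_3)$ of $M$ is invertible exactly when $x_1$ is, so as a variety $G\cong\G_m\times\A^2$, and the morphism $i\colon\G_m\times\G_a^2\to G$, $i(t,u_2,u_3)=(t,t^bu_2,t^cu_3)$, is an isomorphism of group schemes: a short direct computation, in the spirit of the embedding $i$ used in Proposition~\ref{corank1}, shows it turns $*$ into the componentwise product on $\G_m\times\G_a^2$. Since in characteristic zero there are no nonzero homomorphisms between $\G_a$ and $\G_m$, we get $\Aut(G)=\Aut(\G_m)\times\Aut(\G_a^2)=(\Z/2\Z)\times\GL(2,-)$, the first factor being the constant group scheme of Lemma~\ref{AutGm}.

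Next, for $g=(\varepsilon,A)$ with $\varepsilon\in\{\pm1\}$ and $A=(a_{ij})\in\GL(2,-)$, I would write the induced birational self-map of $M$ as $i\circ g\circ i^{-1}$. Using $i^{-1}(x_1,x_2,x_3)=(x_1,x_1^{-b}x_2,x_1^{-c}x_3)$, one computes that it sends $(x_1,x_2,x_3)$ to
$$\bigl(x_1^{\varepsilon},\;a_{11}x_1^{(\varepsilon-1)b}x_2+a_{12}x_1^{\varepsilon b-c}x_3,\;a_{21}x_1^{\varepsilon c-b}x_2+a_{22}x_1^{(\varepsilon-1)c}x_3\bigr).$$
Regularity of the first coordinate forces $\varepsilon=1$. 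With $\varepsilon=1$ the exponent $c-b$ is nonnegative and harmless, while $b-c$ is nonpositive, so regularity forces $a_{12}=0$ unless $b=c$; once $a_{12}=0$ the matrix $A^{-1}$ is again lower triangular, so $g^{-1}$ is automatically regular as well. Hence, by Lemma~\ref{Aut}, $\Aut(M)=\GL(2,-)$ if $b=c$, while if $b<c$ then $\Aut(M)$ is the group $B$ of invertible lower-triangular $2\times2$ matrices.

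It remains to see that $H^1(\Gal_{\overline{K}/K},\Aut(M))$ is trivial. If $b=c$ this is Hilbert's Theorem~90 for $\GL(2,-)$. If $b<c$, write $B=\G_a\rtimes\G_m^2$ as unipotent radical semidirect diagonal torus; the exact sequence of pointed sets attached to $1\to\G_a\to B\to\G_m^2\to1$, together with the vanishing of $H^1$ for $\G_a$ and for $\G_m^2$ (Hilbert's Theorem~90 applied to each $\G_m$ factor), forces the preimage of the basepoint in $H^1(\Gal_{\overline{K}/K},B)$ — which is everything — to reduce to the basepoint. Either way $M$ has no nontrivial twisted forms. The step that needs genuine care is the explicit determination of $\Aut(M)$, in particular correctly isolating the monomials in $x_1$ and splitting off the case $b=c$; the cohomological dévissage for the Borel $B$ is routine but does require the non-abelian exact sequence rather than a bare appeal to Hilbert's Theorem~90.
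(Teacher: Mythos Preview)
Your proof is correct and follows essentially the same route as the paper: identify $G\cong\G_m\times\G_a^2$, compute the birational self-map $i\circ g\circ i^{-1}$ to find that $\Aut(M)$ is $\GL(2,-)$ when $b=c$ and the lower-triangular Borel when $b<c$, then conclude by Hilbert's Theorem~90 and the long exact sequence for $\G_a\rtimes\G_m^2$. If anything, you are slightly more careful than the paper in treating the $\Z/2\Z$-component explicitly and in checking that $g^{-1}$ is regular once $a_{12}=0$.
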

\begin{proof}
Let us compute the group $\Aut(M)$. The group of invertible elements of this monoid is equal to $G=\G_m\times \G_a^2$. Since there are no nontrivial homomorphisms between $\G_a$ and $\G_m$, we have $\Aut(G)=\Z/2\Z\times \GL(2, -)$, where $\Z/2\Z$ is a constant group scheme.

It is easy to see that in order to obtain a {\it regular} morphism on $M$, defined by $g\in \Aut(M)$, the $\Z/2\Z$-component of $g$ must be trivial. Thus by Lemma \ref{Aut} the image of the restriction map $\Aut(M)\to \Aut(G)$ is contained in $\GL(2,-)$.

Take a matrix $\left(\begin{smallmatrix}\alpha & \beta \\ \gamma &\delta \end{smallmatrix}\right)\in \GL(2,L)$ for some extension $L/K$. Let us calculate the corresponding birational map. The embedding $i\colon \G_m\times \G_a^2 \to M=\A^3$ is given by
$$
i(t,v_1,v_2)=(t,t^bv_1,t^cv_2).
$$
Thus such map is $i\circ \left(\begin{smallmatrix}\alpha & \beta \\ \gamma &\delta \end{smallmatrix}\right)\circ (i^{-1})$. It is easy to see that
$$
i^{-1}(x_1,x_2,x_3)=(x_1,x_1^{-b}x_2,x_1^{-c}x_3),
$$
so
$$
\left(\begin{smallmatrix}\alpha & \beta \\ \gamma &\delta \end{smallmatrix}\right)\cdot (x_1,x_1^{-b}x_2,x_1^{-c}x_3)=(x_1,\alpha x_1^{-b}x_2+\beta x_1^{-c}x_3,\gamma x_1^{-b}x_2+\delta x_1^{-c}x_3),
$$
and
$$
i(x_1,\alpha x_1^{-b}x_2+\beta x_1^{-c}x_3,\gamma x_1^{-b}x_2+\delta x_1^{-c}x_3)=(x_1,\alpha x_2+\beta x_1^{b-c}x_3,\gamma x_1^{c-b}x_2+\delta x_3).
$$

There are two cases.

{\bf Case} $b=c$. In this case, every matrix from $\GL(2,L)$ induces a regular map on $M$. Therefore, $\Aut(M)=\GL(2,-)$, and by Hilbert's 90 Theorem $H^1(\Gal_{\overline{K}/K},\Aut(M))$ is trivial.

{\bf Case} $b<c$. In this case, a matrix $\left(\begin{smallmatrix}\alpha & \beta \\ \gamma &\delta \end{smallmatrix}\right)$ from $\GL(2,L)$ induces a regular map on $M$ iff $\beta=0$. Therefore, $\Aut(M)$ is a group of lower-triangular invertible 2 by 2 matrices, which is isomorphic to a semi-direct product $\G_a\leftthreetimes \G_m^2$. Applying long exact sequence in cohomology (see \cite{11}, Chapter 1, \S 5.5, Proposition 38)
$$
\ldots\to H^1(\Gal_{\overline{K}/K},\G_a)\to H^1(\Gal_{\overline{K}/K},\G_a\leftthreetimes \G_m^2)\to H^1(\Gal_{\overline{K}/K},\G_m^2)\to \ldots,
$$
and Hilbert's Theorem 90 we again obtain that the set $H^1(\Gal_{\overline{K}/K},\Aut(M))$ is trivial.
\end{proof}

\begin{proposition}\label{bcQ}
Let $b,c\in\Z_{> 0}$, $b\le c$. Define $d$ and $e$ by $c=bd+e$, $d,e\in\Z$, $0\le e<b$. Denote by $Q_{b,c}$ the polynomial
$$
Q_{b,c}(x_1,y_1,x_2,y_2)=\frac{(x_1^by_2+y_1^bx_2)^{d+1}-(x_1^by_2)^{d+1}-(y_1^bx_2)^{d+1}}{x_1^{b-e}y_1^{b-e}}.
$$
Then the commutative monoid $M$ on $\A_K^3$ given by
$$
(x_1,x_2,x_3)*(y_1,y_2,y_3)=(x_1y_1,x_1^by_2+y_1^bx_2,x_1^cy_3+y_1^cx_3+Q_{b,c}(x_1,y_1,x_2,y_2))
$$
has no nontrivial twisted forms.
\end{proposition}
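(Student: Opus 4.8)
The plan is to follow the template already used in Proposition \ref{bc}: compute the group $G$ of invertible elements of $M$, determine $\Aut(G)$, cut this down to $\Aut(M)$ using Lemma \ref{Aut}, and then deduce triviality of $H^1(\Gal_{\overline{K}/K},\Aut(M))$ from Hilbert's Theorem 90. As before, $(x_1,x_2,x_3)$ is invertible precisely when $x_1$ is, so $G$ is the open subscheme $\{x_1\ne 0\}$. Passing to the coordinates $t=x_1$, $v_1=x_1^{-b}x_2$, $v_2=x_1^{-c}x_3$ on $G$, the embedding $i\colon G\to M=\A^3$ becomes $i(t,v_1,v_2)=(t,t^bv_1,t^cv_2)$, and --- this is the one place where the particular shape of $Q_{b,c}$ enters --- a short computation using the relation $c=bd+e$ shows that the group law on $G$ is $(t,v_1,v_2)\cdot(s,w_1,w_2)=(ts,\ v_1+w_1,\ v_2+w_2+P(v_1,w_1))$ with $P(v_1,w_1)=(v_1+w_1)^{d+1}-v_1^{d+1}-w_1^{d+1}$. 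In particular $G\cong\G_m\times\G_a^2$ via $(t,v_1,v_2)\mapsto(t,v_1,v_2-v_1^{d+1})$, so $M$ has rank $1$ and corank $2$, and $\Aut(G)=\Z/2\Z\times\GL(2,-)$ with the constant group $\Z/2\Z$ acting by inversion on the $\G_m$-factor. Exactly as in Proposition \ref{bc}, the $\Z/2\Z$-component of any element of $\Aut(M)$ must be trivial, since otherwise the induced self-map of $\A^3$ would send $x_1$ to $x_1^{-1}$.

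It remains to decide, for each $A=\left(\begin{smallmatrix}\alpha&\beta\\\gamma&\delta\end{smallmatrix}\right)\in\GL(2,L)$, when the birational self-map $i\circ A\circ i^{-1}$ of $\A^3$ is regular. Transporting the linear action of $A$ on $\G_a^2$ back through the change of coordinates $v_2\leftrightarrow v_2-v_1^{d+1}$ (which introduces a degree $d+1$ correction) and then conjugating by $i$, one computes the second coordinate of $i\circ A\circ i^{-1}$ to be $\alpha x_2+\beta x_1^{b-c}x_3-\beta x_1^{-bd}x_2^{d+1}$; since $b\ge 1$ and $d\ge 1$, the last monomial carries a negative power of $x_1$, so regularity forces $\beta=0$. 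Assuming $\beta=0$, the third coordinate turns out to be $\gamma x_1^{c-b}x_2+\delta x_3+(\alpha^{d+1}-\delta)\,x_1^{e-b}x_2^{d+1}$; since $e<b$, the last monomial again carries a negative power of $x_1$, so regularity forces $\delta=\alpha^{d+1}$. Conversely, when $\beta=0$ and $\delta=\alpha^{d+1}$ the map is $(x_1,\ \alpha x_2,\ \gamma x_1^{c-b}x_2+\alpha^{d+1}x_3)$, which is regular because $c\ge b$ (and $A^{-1}$ again satisfies both conditions). By Lemma \ref{Aut}, $\Aut(M)$ is therefore the subgroup scheme of $\GL(2,-)$ consisting of the matrices $\left(\begin{smallmatrix}\alpha&0\\\gamma&\alpha^{d+1}\end{smallmatrix}\right)$, a connected solvable group isomorphic to $\G_a\leftthreetimes\G_m$ (with $\G_m$ acting on $\G_a$ through the character $\alpha\mapsto\alpha^{d}$). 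Feeding the exact sequence $1\to\G_a\to\Aut(M)\to\G_m\to 1$ into the long exact cohomology sequence and invoking Hilbert's Theorem 90 then shows that $H^1(\Gal_{\overline{K}/K},\Aut(M))$ is trivial, exactly as in the case $b<c$ of Proposition \ref{bc}; hence $M$ has no nontrivial twisted forms.

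The main obstacle is the computation of $i\circ A\circ i^{-1}$ and, above all, the appearance of the monomial $x_1^{e-b}x_2^{d+1}$ with coefficient $\alpha^{d+1}-\delta$: the whole argument hinges on $e-b$ being strictly negative, which is guaranteed precisely by the condition $0\le e<b$ in the Euclidean division $c=bd+e$. This is exactly why $Q_{b,c}$ is built with these exponents --- it makes the group law on the unipotent part of $G$ a nontrivial extension of degree $d+1$, and the resulting \emph{nonlinear} condition $\delta=\alpha^{d+1}$ is what shrinks $\Aut(M)$ from all of $\GL(2,-)$ down to a connected solvable group, for which $H^1$ vanishes. Verifying that $Q_{b,c}$ is genuinely a polynomial (its numerator being divisible by $x_1^by_1^b$) and keeping careful track of all the exponents of $x_1$ is the bulk of the work, but it is routine.
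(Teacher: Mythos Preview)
Your proof is correct and follows essentially the same strategy as the paper: compute $\Aut(M)$ by conjugating the $\GL(2,-)$-action on $\G_a^2$ through the embedding $i$, read off the regularity constraints $\beta=0$ and $\delta=\alpha^{d+1}$ from the negative powers of $x_1$, and conclude via Hilbert's~90 applied to $\G_a\leftthreetimes\G_m$. The only cosmetic difference is that the paper absorbs the change of variables $v_2\leftrightarrow v_2-v_1^{d+1}$ directly into its definition of $i$ (setting $i(t,v_1,v_2)=(t,t^bv_1,t^c(v_2+v_1^{d+1}))$), whereas you keep $i$ linear and perform that substitution separately; your bookkeeping is in fact slightly cleaner and avoids a $\gamma$/$\delta$ slip present in the paper's displayed third coordinate.
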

\begin{proof}
Let us compute the group $\Aut(M)$. As before, the group of invertible elements is equal to $G=\G_m\times \G_a^2$. Then $\Aut(G)=\Z/2\Z\times \GL(2,-)$, and it is easy to see that the image of the restriction map $\Aut(M)\to \Aut(G)$ is contained in $\GL(2,-)$.

Take a matrix $\left(\begin{smallmatrix}\alpha & \beta \\ \gamma &\delta \end{smallmatrix}\right)\in \GL(2,L)$ for some extension $L/K$. Let us calculate the corresponding birational map. The embedding $i\colon \G_m\times \G_a^2 \to M=\A^3$ is given by
$$
i(t,v_1,v_2)=(t,t^bv_1,t^c(v_2+v_1^{d+1})).
$$
Thus such map is $i\circ \left(\begin{smallmatrix}\alpha & \beta \\ \gamma &\delta \end{smallmatrix}\right)\circ (i^{-1})$. It is easy to see that
$$
i^{-1}(x_1,x_2,x_3)=(x_1,x_1^{-b}x_2,x_1^{-c}x_3-(x_1^{-b}x_2)^{d+1}),
$$
so
\begin{multline*}
\left(\begin{smallmatrix}\alpha & \beta \\ \gamma &\delta \end{smallmatrix}\right)\cdot (x_1,x_1^{-b}x_2,x_1^{-c}x_3-(x_1^{-b}x_2)^{d+1})=\\=(x_1,\alpha x_1^{-b}x_2+\beta x_1^{-c}x_3-\beta(x_1^{-b}x_2)^{d+1},\gamma x_1^{-b}x_2+\delta x_1^{-c}x_3-\delta(x_1^{-b}x_2)^{d+1}),
\end{multline*}
finally
\begin{multline*}
i(x_1,\alpha x_1^{-b}x_2+\beta x_1^{-c}x_3-\beta(x_1^{-b}x_2)^{d+1},\gamma x_1^{-b}x_2+\delta x_1^{-c}x_3-\delta(x_1^{-b}x_2)^{d+1})=\\=(x_1,\alpha x_2+\beta x_1^{b-c}x_3-\beta x_1^{-bd}x_2^{d+1}, \star),
\end{multline*}
where we have
\[ \star = x_1^c\bigg(\gamma x_1^{-b}x_2+\delta x_1^{-c}x_3-\delta(x_1^{-b}x_2)^{d+1} + \big(\alpha x_1^{-b}x_2+\beta x_1^{-c}x_3-\beta(x_1^{-b}x_2)^{d+1}\big)^{d+1}\bigg). \]
Now we see that for the map in question to be regular, it is necessary that $\beta=0$, because of the summand $\beta x_1^{-bd}x_2^{d+1}$. Now we finish our computation, assuming that $\beta=0$:
$$
i(x_1,\alpha x_1^{-b}x_2,\gamma x_1^{-b}x_2+\delta x_1^{-c}x_3-\delta(x_1^{-b}x_2)^{d+1})=(x_1,\alpha x_2,\gamma x_1^{c-b}x_2+\delta x_3+(\alpha^{d+1}-\delta)x_1^{e-b}x_2^{d+1}).
$$
We now see that this map is regular iff $\delta=\alpha^{d+1}$. Therefore,
$$
\Aut(M)(L)=\left\{\left(\begin{smallmatrix}\alpha & 0 \\ \gamma &\alpha^{d+1} \end{smallmatrix}\right)\in \GL(2,L) \right\}.
$$
This group is isomorphic to a semi-direct product $\G_a\leftthreetimes \G_m$. Applying long exact sequence in cohomology
$$
\ldots\to H^1(\Gal_{\overline{K}/K},\G_a)\to H^1(\Gal_{\overline{K}/K},\G_a\leftthreetimes \G_m)\to H^1(\Gal_{\overline{K}/K},\G_m)\to \ldots,
$$
and Hilbert's 90, we again obtain that the set $H^1(\Gal_{\overline{K}/K},\Aut(M))$ is trivial.
\end{proof}

For a monoid $M$ on $\A_K^3$ we have $rank(M) \in \{0,1,2,3\}$. The cases $r=0,3,2$ are covered by Propositions \ref{rank0}, \ref{corank0}, and \ref{corank1} correspondingly. It remains to consider the case $r=2$. 

It follows from \cite[Theorem 1]{1} that any monoid on $\A_K^3$ of rank 2 after a field extension becomes isomorphic either to the one considered in Proposition \ref{bc} or to the one considered in Proposition \ref{bcQ}. Applying these Propositions, we obtain that such isomorphism exists already over the base field $K$.

To sum up we obtain the following classification result.

\begin{theorem}
Every commutative monoid on $\A_K^3$ is isomorphic to one of the following monoids:

\medskip

\!\!\!\!\!\!\!\!\!\!\!\!\!\!\!\!\!\!\!\!\!\begin{tabular}{c|c|cc}
    rank & Notation  &  $(x_1, x_2,x_3)*(y_1, y_2,y_3)$ \\
    \hline
       \hline
    0    & $3A$        &  $(x_1+y_1,x_2+y_2,x_3+y_3)$\\
       \hline
    1    & $M\underset{b}{+}A\underset{c}{+}A$       &  $(x_1y_1,x_1^by_2+y_1^bx_2,x_1^cy_3+y_1^cx_3)$, & $b,c\in \Z_{\ge 0}$, $b\le c$,\\
       \hline
    1    & $M\underset{b}{+}A\underset{b,c}{+}A$       &  $(x_1y_1,x_1^by_2+y_1^bx_2,x_1^cy_3+y_1^cx_3+Q_{b,c}(x_1,y_1,x_2,y_2))$, & $b,c\in \Z_{> 0}$, $b\le c$,\\
       \hline
    2    & $M+M\underset{b,c}{+}A$       &  $(x_1y_1,x_2y_2,x_1^bx_2^cy_3+y_1^by_2^cx_3)$, & $b,c\in \Z_{\ge 0}$, $b\le c$,\\
       \hline
    2    & $M^2(L)\underset{c}{+}A$      & $((x_1,x_2)*_{M(L)}(y_1,y_2), N_L(x_1,x_2)^cy_3+N_L(y_1,y_2)^cx_3)$ & $c\in\Z_{\ge 0}$, $\deg L/K =2$ ,\\
       \hline
    3    & $M+M+M$      & $(x_1y_1,x_2y_2,x_3y_3)$, \\
       \hline
    3    & $M^2(L)+M$     & $((x_1,x_2)*_{M(L)}(y_1,y_2),x_3y_3)$, & \\
       \hline
    3    & $M^3(L)$       & Multiplicative monoid of a cubic extension $L/K$. 
\end{tabular}

\medskip

Here the polynomial $Q_{b,c}$ is defined in Proposition \ref{bcQ}. By $(x_1,y_1)*_{M(L)}(x_2,y_2)$ we mean the formula that defines multiplication in $L$ in some fixed basis, and by $N_L(x_1,x_2)$ we mean the formula that defines norm in $L$ in the same basis.   

Moreover these monoids are pairwise non-isomorphic.
\end{theorem}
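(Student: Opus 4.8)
\emph{Existence.} The plan is to derive the statement by combining the structural results already established, organizing the argument by the isomorphism invariant $r=\mathrm{rank}(M)$, which for a monoid on $\A_K^3$ lies in $\{0,1,2,3\}$. The case $r=0$ is exactly Proposition~\ref{rank0}, giving $3A$. For $r=3$ (corank $0$), Proposition~\ref{corank0} identifies $M$ with the multiplicative monoid of a separable $K$-algebra of dimension $3$; up to isomorphism there are three such algebras — $K^3$, $K\times L$ with $L/K$ quadratic, and the cubic extensions $L/K$ — with multiplicative monoids $M+M+M$, $M^2(L)+M$, and $M^3(L)$. For $r=2$ (corank $1$) I would apply Proposition~\ref{corank1} with $n=3$, so that $d_1+\cdots+d_k=2$: either $k=1$ and $d_1=2$, with $R_1\cong K\times K$ (giving $M+M\underset{b,c}{+}A$, $b=c=c_1$) or $R_1\cong L$ a quadratic field (giving $M^2(L)\underset{c}{+}A$, $c=c_1$), or $k=2$ and $d_1=d_2=1$, forcing $R_1=R_2=K$ and $0\le c_1<c_2$ (giving $M+M\underset{b,c}{+}A$, $b=c_1<c=c_2$); together these produce exactly the family $M+M\underset{b,c}{+}A$ for all $0\le b\le c$. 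Finally, for $r=1$ (corank $2$): by \cite[Theorem~1]{1} the base change $M\otimes_K\overline{K}$ is isomorphic to a split monoid of the type considered in Proposition~\ref{bc} (parameters $0\le b\le c$) or Proposition~\ref{bcQ} (parameters $0<b\le c$); since those propositions prove these split monoids have no nontrivial twisted forms, the isomorphism already holds over $K$, yielding $M\underset{b}{+}A\underset{c}{+}A$ and $M\underset{b}{+}A\underset{b,c}{+}A$.

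\emph{Pairwise non-isomorphism.} Monoids of distinct rank are non-isomorphic, so it suffices to compare within a fixed $r$. If $M_1\otimes_K\overline{K}$ and $M_2\otimes_K\overline{K}$ are non-isomorphic then so are $M_1$ and $M_2$; since the base change of each table entry is one of the monoids in the pairwise non-isomorphic classification of \cite[Theorem~1]{1}, this already separates every pair of entries whose base changes have different isomorphism classes — in particular all distinct values of $b$ and $c$ inside a family, and $M\underset{b}{+}A\underset{c}{+}A$ from $M\underset{b}{+}A\underset{b,c}{+}A$. The pairs that remain are those sharing a common split form, namely $M+M\underset{c,c}{+}A$ together with the $M^2(L)\underset{c}{+}A$ for $r=2$, and $M+M+M$ together with the $M^2(L)+M$ and $M^3(L)$ for $r=3$; these I would separate using the uniqueness clauses already proved: for $r=3$ by the isomorphism class of the defining separable algebra (Proposition~\ref{corank0}), and for $r=2$ by the data $(R_i,c_i)$ of Proposition~\ref{corank1}, which in particular distinguishes $M+M\underset{c,c}{+}A$ (algebra $K\times K$) from $M^2(L)\underset{c}{+}A$ (algebra $L$). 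For $r=1$ there is nothing left to do, since Propositions~\ref{bc} and~\ref{bcQ} show the relevant split monoids have trivial first cohomology.

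\emph{Where the difficulty lies.} There is no deep obstacle: the only genuine external input is \cite[Theorem~1]{1}, used both for the reduction in the $r=1$ case and for non-isomorphism over $\overline{K}$, and the rest is assembly of earlier results of this paper. The one point requiring care is the bookkeeping produced by specializing the general corank-$0$ and corank-$1$ classifications to $n=3$ — one must check that every row of the table arises exactly once and that the parameter constraints match up with no overlap or gap, in particular that $b\le c$ in $M+M\underset{b,c}{+}A$ correctly absorbs both the strict case $k=2$ and the equality case $k=1$ with $R_1\cong K\times K$, and that the ranges $\Z_{\ge 0}$ and $\Z_{>0}$ in the two rank-$1$ families are exactly the ones forced by Propositions~\ref{bc} and~\ref{bcQ}.
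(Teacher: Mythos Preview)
Your proposal is correct and follows exactly the paper's approach: split by rank, invoke Propositions~\ref{rank0}, \ref{corank0}, \ref{corank1} for ranks $0$, $3$, $2$, and for rank $1$ combine \cite[Theorem~1]{1} with Propositions~\ref{bc} and~\ref{bcQ}. You have in fact supplied more detail than the paper does --- the explicit bookkeeping for the $n=3$ specialization of the corank-$0$ and corank-$1$ classifications and the pairwise non-isomorphism argument --- but the structure is identical.
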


\begin{corollary}
Assume that $K=\R$. Then every commutative monoid on $\A_{\R}^3$ is isomorphic to one of the following monoids

\medskip

\!\!\!\!\!\!\!\!\!\!\!\!\!\!\!\!\!\!\!\!\!\begin{tabular}{c|c|cc}
    rank & Notation  &  $(x_1, x_2,x_3)*(y_1, y_2,y_3)$ \\
    \hline
       \hline
    0    & $3A$        &  $(x_1+y_1,x_2+y_2,x_3+y_3)$\\
       \hline
    1    & $M\underset{b}{+}A\underset{c}{+}A$       &  $(x_1y_1,x_1^by_2+y_1^bx_2,x_1^cy_3+y_1^cx_3)$, & $b,c\in \Z_{\ge 0}$, $b\le c$,\\
       \hline
    1    & $M\underset{b}{+}A\underset{b,c}{+}A$       &  $(x_1y_1,x_1^by_2+y_1^bx_2,x_1^cy_3+y_1^cx_3+Q_{b,c}(x_1,y_1,x_2,y_2))$, & $b,c\in \Z_{> 0}$, $b\le c$,\\
       \hline
    2    & $M+M\underset{b,c}{+}A$       &  $(x_1y_1,x_2y_2,x_1^bx_2^cy_3+y_1^by_2^cx_3)$, & $b,c\in \Z_{\ge 0}$, $b\le c$,\\
       \hline
    2    & $M^2(\CC)\underset{c}{+}A$      & $(x_1y_1-x_2y_2,x_1y_2+x_2y_1,(x_1^2+x_2^2)^cy_3+(y_1^2+y_2^2)^cx_3)$ & $c\in\Z_{\ge 0}$, \\
       \hline
    3    & $M+M+M$      & $(x_1y_1,x_2y_2,x_3y_3)$, \\
       \hline
    3    & $M^2(\CC)+M$     & $(x_1y_1-x_2y_2,x_1y_2+x_2y_1,x_3y_3)$. & 
\end{tabular}

\end{corollary}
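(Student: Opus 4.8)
The plan is to deduce this corollary directly from Theorem 5.3 by specializing the base field to $K=\R$ and invoking the classification of finite extensions of the reals. First I would recall that the only finite (equivalently, since $\mathrm{char}\,K=0$, finite separable) extensions of $\R$ are $\R$ itself and $\CC$, of degrees $1$ and $2$. This is immediate from the fact that $\CC$ is algebraically closed with $[\CC:\R]=2$: every finite extension of $\R$ embeds into $\CC$, so its degree divides $2$. In particular $\R$ has a unique quadratic extension, namely $\CC$, and no cubic extension at all.

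Next I would run through the eight rows of the general table in Theorem 5.3 one at a time. The rows $3A$, $M\underset{b}{+}A\underset{c}{+}A$, $M\underset{b}{+}A\underset{b,c}{+}A$, $M+M\underset{b,c}{+}A$, and $M+M+M$ involve no field extension, so they carry over to $\R$ verbatim. For the two rows featuring a quadratic extension $L/K$, that is $M^2(L)\underset{c}{+}A$ and $M^2(L)+M$, the only option over $\R$ is $L=\CC$; here I would substitute the standard multiplication and norm formulas for $\CC$ in the basis $\{1,i\}$, namely $(x_1,x_2)*_{M(\CC)}(y_1,y_2)=(x_1y_1-x_2y_2,\,x_1y_2+x_2y_1)$ and $N_{\CC}(x_1,x_2)=x_1^2+x_2^2$, which yields precisely rows four and seven of the corollary. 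Finally, for the last row $M^3(L)$ (the multiplicative monoid of a cubic extension) I would note that no cubic extension of $\R$ exists, so this row is vacuous over $\R$ and is dropped; this is exactly why the corollary has seven rows rather than eight.

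Pairwise non-isomorphism of the seven resulting monoids is inherited directly from the corresponding assertion in Theorem 5.3, since the present list is obtained from the general one by specializing $L$ to $\CC$ and deleting the (now empty) cubic row; a sublist of pairwise non-isomorphic objects remains pairwise non-isomorphic. There is essentially no serious obstacle in this argument, as it is a routine specialization of Theorem 5.3. The only ingredients beyond that theorem are the two elementary field-theoretic facts used above: that $\CC$ is the unique quadratic extension of $\R$, and that $\R$ admits no cubic extension; together with the explicit computation of the norm $N_{\CC/\R}(x_1+ix_2)=x_1^2+x_2^2$ that produces the displayed formulas.
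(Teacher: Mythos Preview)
Your proposal is correct and follows exactly the approach implicit in the paper: the corollary is stated there without proof, as an immediate specialization of Theorem~5.3 once one notes that $\R$ has $\CC$ as its unique quadratic extension and no cubic extension, and inserts the explicit multiplication and norm formulas for $\CC$.
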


\medskip

\begin{corollary}
Assume that $K=\Q$. Then every commutative monoid on $\A_{\Q}^3$ is isomorphic either to one of the following monoids

\medskip

\!\!\!\!\!\!\!\!\!\!\!\!\!\!\!\begin{tabular}{c|c|cc}
    rank & Notation  &  $(x_1, x_2,x_3)*(y_1, y_2,y_3)$ \\
    \hline
        \hline
    0    & $3A$        &  $(x_1+y_1,x_2+y_2,x_3+y_3)$\\
        \hline
    1    & $M\underset{b}{+}A\underset{c}{+}A$       & 
    $(x_1y_1,x_1^by_2+y_1^bx_2,x_1^cy_3+y_1^cx_3)$, & $b,c\in \Z_{\ge 0}$, $b\le c$,\\
        \hline
    1    & $M\underset{b}{+}A\underset{b,c}{+}A$       &  $(x_1y_1,x_1^by_2+y_1^bx_2,x_1^cy_3+y_1^cx_3+Q_{b,c}(x_1,y_1,x_2,y_2))$, & $b,c\in \Z_{> 0}$, $b\le c$,\\
        \hline
    2    & $M+M\underset{b,c}{+}A$       &  $(x_1y_1,x_2y_2,x_1^bx_2^cy_3+y_1^by_2^cx_3)$, & $b,c\in \Z_{\ge 0}$, $b\le c$,\\
        \hline
        
    2    & $M^2(\Q(\sqrt{d}))\underset{c}{+}A$      & $\big(x_1y_1+dx_2y_2,x_1y_2+x_2y_1$, &  $c\in\Z_{\ge 0}$ and\\  
     & & $(x_1^2-dx_2^2)^cy_3+(y_1^2-dy_2^2)^cx_3\big)$ & $d\in\Q$ isn't a square\\
     
         \hline
    3    & $M+M+M$      & $(x_1y_1,x_2y_2,x_3y_3)$, \\
        \hline
    3    & $M^2(\Q(\sqrt{d}))+M$     & $(x_1y_1+dx_2y_2,x_1y_2+x_2y_1,x_3y_3)$, & $d\in\Q$ isn't a square \\
        \hline
    3    & $M^3(L)$       & Multiplicative monoid of a cubic extension $L/\Q$. 

\end{tabular}

\end{corollary}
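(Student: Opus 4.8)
The plan is to prove the theorem by cases on $\mathrm{rank}(M)\in\{0,1,2,3\}$, using the classification results of Section~3 together with Propositions~\ref{bc} and~\ref{bcQ}, and then to read off pairwise non-isomorphism from the invariants these results produce. If $\mathrm{rank}(M)=0$, then $\mathrm{corank}(M)=3$ and Proposition~\ref{rank0} gives $M\cong\G_a^3$, the row $3A$. If $\mathrm{rank}(M)=3$, then $\mathrm{corank}(M)=0$ and Proposition~\ref{corank0} identifies $M$ with the multiplicative monoid of a separable $K$-algebra of dimension~$3$; enumerating such algebras --- $K^3$, a product $L\times K$ with $L/K$ a quadratic field extension, and cubic field extensions $L/K$ --- one obtains exactly the rows $M+M+M$, $M^2(L)+M$ and $M^3(L)$.

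If $\mathrm{rank}(M)=2$, then $\mathrm{corank}(M)=1$ and Proposition~\ref{corank1} applies with $n=3$. The partition $d_1+\dots+d_k=n-1=2$ is either $k=1$ with $d_1=2$, where $R_1$ is a two-dimensional separable algebra, namely the split algebra $K\times K$, which upon substitution in formula~(\ref{Norms}) gives the $b=c$ specialization of $M+M\underset{b,c}{+}A$, or a quadratic field $L$, which gives $M^2(L)\underset{c}{+}A$; or $k=2$ with $d_1=d_2=1$ and $R_1=R_2=K$, $0\le c_1<c_2$, which gives $M+M\underset{b,c}{+}A$ with $b<c$. Together these account for all the listed rank-$2$ families, once one notes that multiplication in $K\times K$ is componentwise and that $N(\overline{x}_i)$ there is the product of the entries.

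The remaining case, $\mathrm{rank}(M)=1$, is the genuinely new one: it is covered neither by Section~3 nor directly by \cite{1} over $K$. For it I would invoke \cite[Theorem~1]{1}: after passing to a finite field extension, every rank-$1$ commutative monoid on $\A^3$ becomes isomorphic to one of the split monoids of Proposition~\ref{bc}, namely $M\underset{b}{+}A\underset{c}{+}A$ with $0\le b\le c$, or of Proposition~\ref{bcQ}, namely $M\underset{b}{+}A\underset{b,c}{+}A$ with $0<b\le c$. Since Propositions~\ref{bc} and~\ref{bcQ} establish that these split monoids have no nontrivial twisted forms, the isomorphism must already be defined over $K$, which gives the two rank-$1$ rows.

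It remains to prove that the listed monoids are pairwise non-isomorphic. Since rank is an isomorphism invariant, it suffices to separate monoids of equal rank, and for ranks $2$ and $3$ this is exactly the uniqueness clauses of Propositions~\ref{corank1} and~\ref{corank0}: the integers $c_i$ (equivalently the parameters $b,c$), the dimensions $d_i$, and the isomorphism classes of the $R_i$ are all recovered from the monoid, so in particular $M+M\underset{b,b}{+}A$ is never isomorphic to $M^2(L)\underset{c}{+}A$, because $K\times K\not\cong L$ as $K$-algebras. For rank $1$, distinct $(b,c)$ and the presence or absence of the correction term $Q_{b,c}$ already give non-isomorphic monoids over $\overline{K}$ by \cite[Theorem~1]{1}, and since all of them are split over $K$ nothing further is needed; the field-specific versions of the classification then follow simply by instantiating this list and recording norms in the basis $\{1,\sqrt{d}\}$. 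The step I expect to demand the most care is the bookkeeping in the second and third paragraphs: one must check that the abstract parametrizations of Propositions~\ref{corank0} and~\ref{corank1} match the explicit formulae of the table with no gaps and no double-counting --- above all that the case $b=c$ of $M+M\underset{b,c}{+}A$ is precisely the split algebra $K\times K$ and is therefore disjoint from the genuine-quadratic-field family $M^2(L)\underset{c}{+}A$ --- and that \cite[Theorem~1]{1} supplies exactly the $\overline{K}$-classification the rank-$1$ argument consumes, the descent to $K$ being furnished by Propositions~\ref{bc} and~\ref{bcQ}.
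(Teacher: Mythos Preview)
Your proposal is correct and follows essentially the same route as the paper: the corollary is an immediate specialization to $K=\Q$ of Theorem~5.3, whose proof in the paper is precisely the rank-by-rank case analysis you outline---ranks $0$, $3$, $2$ handled by Propositions~\ref{rank0}, \ref{corank0}, \ref{corank1}, and rank~$1$ by combining \cite[Theorem~1]{1} with the triviality of twisted forms from Propositions~\ref{bc} and~\ref{bcQ}. Your write-up is in fact more detailed than the paper's, which merely cites the propositions without spelling out the enumeration of separable algebras or the non-isomorphism bookkeeping.
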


\medskip

\begin{example}
Let $K=\Q$, and $L=\Q(\sqrt[3]{2})$. Then the multiplication in $M^3(L)$ is defined by
$$(x_1,x_2,x_3)*(y_1,y_2,y_3)=(x_1y_1+2x_2y_3+2x_3y_2, \ x_1y_2+x_2y_1+2x_3y_3, \ x_1y_3+x_2y_2+x_3y_1).$$
\end{example}

\begin{example}
Let $K=\Q$, and $L=\Q[\alpha]/(\alpha^3+\alpha+1)$. Then the multiplication in $M^3(L)$ is defined by
\[(x_1,x_2,x_3)*(y_1,y_2,y_3)= \]
\[=(x_1y_1-x_2y_3-x_3y_2, x_1y_2+x_2y_1-x_2y_3-x_3y_2-x_3y_3,x_1y_3+x_2y_2+x_3y_1-x_3y_3).\]
\end{example}

\section{Final remarks}
Note that the proofs of our results contain the calculation of the automorphism group of the described monoids. Indeed calculation of the automorphism group for the split monoids are performed in the proofs directly; and the automorphisms of an arbitrary monoid $M$ are the automorphisms of $M\otimes \overline{K}$ that commutes with the action of the Galois group. We list the resulting automorphism groups here.

\medskip

\begin{tabular}{c|c}
  $(x_1, \ldots,x_n)*(y_1, \ldots,y_n)$ & $\Aut(M)$ \\
    \hline
        \hline
    $(x_1+y_1,\ldots,x_n+y_n)$ & $\GL(n,K)$,\\
        \hline
    Multiplicative monoid of a separeble algebra $R$ & $\Aut(R)$,\\
        \hline
    Formula from Proposition \ref{corank1} & $\Aut(R_1)\times\ldots\times\Aut(R_k)\times \G_m$,\\    
        \hline
    $(x_1y_1,x_1^by_2+y_1^bx_2,x_1^by_3+y_1^bx_3)$ & $\GL(2,K)$,\\
        \hline
    $(x_1y_1,x_1^by_2+y_1^bx_2,x_1^cy_3+y_1^cx_3)$, $b>c$ & $\left\{\left(\begin{smallmatrix}\alpha & 0 \\ \gamma &\beta \end{smallmatrix}\right)\in \GL(2,K) \right\}$,\\
        \hline
    $(x_1y_1,x_1^by_2+y_1^bx_2,x_1^cy_3+y_1^cx_3+Q_{b,c}(x_1,y_1,x_2,y_2))$ & $\left\{\left(\begin{smallmatrix}\alpha & 0 \\ \gamma &\alpha^{d+1} \end{smallmatrix}\right)\in \GL(2,K) \right\}$.

\end{tabular}

\medskip

Now we give some suggestions on possible generalizations of our results. In \cite{2, 3} monoids on affine surfaces were studied over an algebraically closed field of characteristic zero. We believe that our technique can help to generalise the results of these papers to the case of non-closed fields of characteristic zero. So we formulate\par
{\bf Question 1}: is it possible to reach similar classification for monoids on affine surfaces over an arbitrary (not necessary algebraically closed) field $K$ of characteristic zero? \par 

However, obtaining such classification in positive characteristic can be much harder problem even if the ground field is closed, because of the nontrivial commutative unipotent groups, see, for example, \cite[Exercise 8.8]{14}. So let us formulate \par 
{ \bf Question 2}: is it possible to reach similar classification for monoids on affine spaces $\A_K^n$ over a field $K$ of characteristic $p$, where $p$ is a prime number? Consider first the cases where $K$ is algebraically closed and $n = 2$.\par

\end{document}